\newtheorem{theorem}{Theorem}[section]
\newtheorem{lemma}[theorem]{Lemma}
\newtheorem*{lemma*}{Lemma}
\newtheorem{proposition}[theorem]{Proposition}
\newtheorem{corollary}[theorem]{Corollary}
\theoremstyle{definition}
\newtheorem{definition}[theorem]{Definition}
\newtheorem{example}[theorem]{Example}
\theoremstyle{remark}
\numberwithin{equation}{section}
\newcommand{\be}[1]{\begin{equation}\label{#1}}
\newcommand{\ee}{\end{equation}}
\newcommand{\abs}[1]{\lvert#1\rvert}
\newcommand{\C}{\mathbb{C}}
\newcommand{\W}{\mathscr{W}}
\newcommand{\R}{\mathbb{R}}
\newcommand{\dtext}{\textnormal d}
\newcommand{\onto}{\xrightarrow[]{{}_{\!\!\textnormal{onto\,\,}\!\!}}}
\newcommand{\deff}{\stackrel {\textnormal{\tiny{def}}}{=\!\!=} }
\DeclareMathOperator{\re}{Re}
\DeclareMathOperator{\im}{Im}
\DeclareMathOperator{\loc}{loc}
\DeclareMathOperator{\Card}{Card}
\def\XXint#1#2#3{{\setbox0=\hbox{$#1{#2#3}{\int}$}\vcenter{\hbox{$#2#3$}}\kern-.5\wd0}}
\def\XXiint#1#2#3{{\setbox0=\hbox{$#1{#2#3}{\iint}$}\vcenter{\hbox{$#2#3$}}\kern-.5\wd0}}
\def\le{\leqslant}
\def\ge{\geqslant}
\begin{document}

\title{Lipschitz~regularity~for inner-variational~equations}

\author[T. Iwaniec]{Tadeusz Iwaniec}
\address{Department of Mathematics, Syracuse University, Syracuse,
NY 13244, USA and Department of Mathematics and Statistics,
University of Helsinki, Finland}
\email{tiwaniec@syr.edu}

\author[L. V. Kovalev]{Leonid V. Kovalev}
\address{Department of Mathematics, Syracuse University, Syracuse,
NY 13244, USA}
\email{lvkovale@syr.edu}

\author[J. Onninen]{Jani Onninen}
\address{Department of Mathematics, Syracuse University, Syracuse,
NY 13244, USA}
\email{jkonnine@syr.edu}
\thanks{Iwaniec was supported by the NSF grant DMS-0800416 and the Academy of Finland project 1128331. Kovalev was supported by the NSF grant DMS-0968756. Onninen was supported by the NSF grant DMS-1001620.}

\subjclass[2010]{Primary 49N60; Secondary 35B65, 73C50}


\keywords{Inner variation,  Lipschitz regularity,  extremal problems, quasiregular mappings, topological degree}

\begin{abstract}
We obtain Lipschitz regularity results for a fairly general class of nonlinear first-order PDEs. These equations arise from the inner variation of certain energy integrals. Even in the simplest model case of the Dirichlet energy the \textit{inner-stationary solutions} need not be differentiable everywhere; the Lipschitz continuity is the best possible. But the proofs, even in the Dirichlet case, turn out to relay on  topological arguments.
The appeal to the inner-stationary solutions in this context is motivated by the classical problems of existence and regularity of the energy-minimal deformations in the theory of harmonic mappings and certain mathematical models of nonlinear elasticity; specifically, neo-Hookian type problems.
\end{abstract}
\maketitle


\section{Introduction}

We establish Lipschitz regularity of solutions of nonlinear first-order PDEs that arise from inner variation of numerous energy integrals. This includes the model case of the Dirichlet energy for mappings $h \colon \Omega \to \Omega^\ast$ between two designated domains in $\C$. Roughly speaking, the inner variation of $h$ amounts to composing $h$ with a diffeomorphism of $\Omega$ onto itself. This type of variation is often used  when the standard first variation is not allowed.  For instance, when dealing with mappings with nonnegative Jacobian  the inner variation is necessary to preserve the sign of the Jacobian.  Let us begin with the Dirichlet integral,
\begin{equation}\label{direnergy}
\mathscr E _{_{\Omega}} [h]= \iint_{\Omega} \abs{Dh}^2 =2 \iint_{\Omega} \left( \abs{h_z}^2+ \abs{h_{\bar z}}^2 \right)\, \dtext x_1\, \dtext x_2\;,\;\;\;z = x_1 + i\,x_2
\end{equation}
Hereafter  $\,h_z = \frac{\partial h}{\partial z} \,$ and $\, h_{\bar z} = \frac{\partial h}{\partial \overline{z}}\,$ are complex partial derivatives of $\,h\,$. The first variation of $\mathscr E$ results in the Euler-Lagrange equation,
\begin{equation}\label{ELe}
\Delta h = 4 h_{z \bar z} =0
\end{equation}
In contrast, the inner variation leads to a nonlinear equation
\begin{equation}\label{HLe}
\frac{\partial}{ \partial \bar z} \left(h_z \, \overline{h_{\bar z}}\right)=0, \quad
\text{equivalently, }\quad h_z \, \overline{h_{\bar z}} = \phi, \quad  \text{($\phi$ is analytic)}
\end{equation}
for mapping in the Sobolev space $\,\mathscr W^{1,2}_{\textnormal{loc}}(\Omega)\,$.
This equation will henceforth be referred to as the \textit{Hopf-Laplace equation}. There are important nonharmonic solutions of (\ref{HLe}). Such solutions arise typically as weak limits of the energy-minimizing sequences of diffeomorphisms $h \colon \Omega \onto \Omega^\ast$.  Passing to the limit we often loose  harmonicity; at the points where the limit map fails to be injective~\cite{IKKO}, and only at those points~\cite{IKOhopf}.

The unavailability of the Euler-Lagrange equation is a major source of difficulties in the theory of nonlinear elasticity~\cite{Ba, Ba1, SS}.
This drives one to investigate the regularity of the energy-minimal mappings  on the basis of the inner-variational  equation alone; also known as  \textit{energy-momentum} or \textit{equilibrium} equations, etc~\cite{Cob, SSe, Ta}.
Several results in this direction were obtained in~\cite{BOP, CIKO, Mo, Ya}. Nevertheless this theory is still in its infancy.

 The most general setting we are dealing with can be described as follows.
Let $\mathcal H = \mathcal H(z,\xi)$ be a continuous function  in $\,\Omega \times \widehat{\mathbb C}_R\,$,  where $\,\widehat{\mathbb C}_R = \{ \xi \colon R < |\xi| \leqslant \infty\,\}$, $0 \leqslant R < \infty$.
We impose two structural conditions on $\mathcal H$. The first one is the Lipschitz condition with respect to the reciprocal of the $\,\xi\,$ variable,
\begin{equation}\label{LipCond}
\;\;|\mathcal H(z, \xi_1) - \mathcal H(z, \xi_2) | \leqslant  L \cdot \Big|\frac{1}{\xi_1} - \frac{1}{\xi_2}\Big|\;,\;\;\text{for $\,z \in \Omega\,$ and $\,\xi_1 ,\,\xi_2 \in \widehat{\mathbb C}_R$.}
\end{equation}
with  a constant $\,0\leqslant L < \infty$.
 Regarding regularity with respect to $z \in \Omega$,  we shall require  that the function $z \mapsto  \mathcal H(z,\xi)$  be H\"older continuous. Precisely, the second structural condition reads as:
 \begin{equation}\label{HoldCond}
  \underset{z\in\Omega}{\sup}\, |\mathcal H(z, \xi)| \;\;+\;\;\underset{z_1\neq z_2}{\sup} \frac{|\mathcal H(z_1, \xi) - \mathcal H(z_2, \xi)|}{|z_1 - z_2|^\alpha} \; \leqslant M\,,\;\;\;\text{for}\;\;|\xi| >R
 \end{equation}

\begin{definition}
A function $h\in \W^{1,2}_{\loc} (\Omega)$ is said to be a solution of the equation
\begin{equation}\label{TheEquat}
h_{\bar z} =  \mathcal H(z, h_z)
\end{equation}
if~\eqref{TheEquat} holds for almost every point $\,z\in \Omega\,$,  whenever  $\abs{h_z(z) }>R$.
\end{definition}

Note we impose no condition  at the points where $\,\abs{h_z(z)} \leqslant R\,$; this yields boundedness of the gradient of $\,h\, $,  $\,|h_{\bar z } | \leqslant |h_z| \leqslant R\,$.

A simplified version of our main result reads as follows

\begin{theorem}\label{MainTheorem} Let the equation~\eqref{TheEquat}  comply with the conditions~\eqref{LipCond}  and~\eqref{HoldCond}. Then every solution $h \in \mathscr W^{1,2}(\Omega)$
with nonnegative Jacobian  is locally Lipschitz continuous but not necessarily $\mathscr C^1$-smooth.
 \end{theorem}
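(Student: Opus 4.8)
The plan is to reduce the general equation \eqref{TheEquat} to a structure resembling the Hopf–Laplace equation \eqref{HLe} and then exploit a topological mechanism on level sets. First I would dispose of the trivial region: on the set $\{\abs{h_z}\le R\}$ the gradient is automatically bounded, so Lipschitz estimates are needed only on the open set $U=\{\abs{h_z}>R\}$, where \eqref{TheEquat} holds a.e. On $U$ the nonnegativity of the Jacobian, $J_h=\abs{h_z}^2-\abs{h_{\bar z}}^2\ge 0$, together with $h_{\bar z}=\mathcal H(z,h_z)$ and the trivial bound $\abs{h_{\bar z}}\le\abs{h_z}$ shows that $h$ is (locally, where $\abs{h_z}$ is large) a quasiregular-type mapping, but the dilatation is \emph{not} controlled a priori — that is precisely the point. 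So instead I would form the auxiliary analytic-type quantity: writing the equation as $h_z\overline{h_{\bar z}}=h_z\,\overline{\mathcal H(z,h_z)}=:\Phi(z)$, the Lipschitz condition \eqref{LipCond} should give that $\Phi$ is, modulo a Hölder-controlled error coming from \eqref{HoldCond}, "almost holomorphic" — more precisely that $\Phi$ lies in an appropriate fractional Sobolev / BMO-type class, so its zero set is small and well-behaved. The key algebraic identity to extract is that away from the zeros of $\Phi$ one can solve for the complex dilatation $\mu_h=h_{\bar z}/h_z$ in terms of $\Phi$ and $h_z$, and the Lipschitz bound \eqref{LipCond} translates into $\abs{h_{\bar z}}\le \abs{\mathcal H(z,0^{-1})}+L\abs{h_z}^{-1}$ — wait, more carefully, \eqref{LipCond} with $\xi_1=h_z$, $\xi_2=\infty$ gives $\abs{h_{\bar z}-\mathcal H(z,\infty)}\le L/\abs{h_z}$, so $h_{\bar z}$ is \emph{bounded} by $M+L/R$ on all of $U$; this already yields $\abs{h_{\bar z}}\le \const$ everywhere on $\Omega$.

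Thus the real content is bounding $h_z$. Here I expect to follow the topological route the abstract advertises. The idea is to study, for a value $c$ and small $r$, the preimage components $\{z: h(z)=c\}$ or the sublevel sets of $\abs{h-c}$, and to use the winding number / topological degree of $h$ on small circles $\partial B(z_0,r)$. Because $J_h\ge 0$, the degree $\deg(h,B(z_0,r),c)$ is nonnegative and, for $c$ near $h(z_0)$, positive; combining this with the a.e. identity and Stokes/argument-principle bookkeeping applied to $\Phi=h_z\overline{h_{\bar z}}$ (which is controlled) should force a lower bound on how much area $h(B(z_0,r))$ covers, hence — via $\iint_{B(z_0,r)}J_h = $ (signed area, by degree theory $\ge$ area of image counted with multiplicity) and the energy bound $\iint\abs{Dh}^2<\infty$ — an oscillation estimate $\abs{h(z)-h(z_0)}\le C\abs{z-z_0}$. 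Concretely: the Lipschitz condition makes $\abs{h_{\bar z}}$ bounded, so $\abs{Dh}^2\le C(\abs{h_z}^2+1)\le C(J_h+1)$ on $U$, and then a Caccioppoli-type / Morrey-growth argument for $\iint_{B(z_0,r)}J_h$ will give the decay $\iint_{B(z_0,r)}\abs{Dh}^2\le C r^2$, which by Morrey's lemma is exactly local Lipschitz continuity. The topological input is what upgrades the mere finiteness of the Dirichlet integral to the quadratic Morrey decay: without the sign of $J$ and a degree argument one only gets $W^{1,2}$, not $\mathrm{Lip}$.

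The main obstacle, I expect, is making the passage from "$\Phi=h_z\overline{h_{\bar z}}$ is nearly holomorphic" to the quadratic decay of $\iint_{B(z_0,r)}J_h$ rigorous, because $h$ need not be continuous a priori in the usual sense needed for degree theory, and the Hölder dependence on $z$ in \eqref{HoldCond} introduces an inhomogeneous term $\D_{\bar z}\Phi = O(\abs{z_1-z_2}^{\alpha-1})$-type that must be absorbed; one must first establish enough regularity (e.g. that $h$ has a continuous representative, via the fact that $\abs{h_{\bar z}}$ is bounded and $h_z\in L^2$, perhaps through a Stoilow-type factorization $h=F\circ\chi$ with $\chi$ a quasiconformal homeomorphism and $F$ holomorphic-up-to-error) before any topological degree can be invoked. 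So the logical skeleton I would write is: (1) \eqref{LipCond} $\Rightarrow$ $\abs{h_{\bar z}}$ bounded, hence $\abs{Dh}^2\lesssim J_h+1$ on $U$; (2) factor $h$ through a quasiconformal change of variables to get a continuous representative and reduce to a perturbed holomorphic equation; (3) use nonnegativity of $J_h$ and the topological degree on small disks, together with the control on $\Phi$, to prove $\iint_{B(z_0,r)}J_h\le Cr^2$; (4) conclude by Morrey. The counterexample showing $\mathscr C^1$ can fail would be exhibited separately — already in the Dirichlet case $\phi\equiv$ const $\neq 0$ the solution $h=\bar z + $ (something with a corner) shows corners can occur — confirming Lipschitz is sharp.
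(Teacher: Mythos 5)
Your reduction step is fine and matches the paper: using \eqref{LipCond} with $\xi_2=\infty$ together with $J_h\ge 0$ on the set $\{\abs{h_z}\le \max(R,\sqrt L)\}$ does give $\norm{h_{\bar z}}_{\mathscr L^\infty}\lesssim M+\sqrt L+R$ (your constant $M+L/R$ breaks down for small $R$, but that is cosmetic), and indeed $\abs{Dh}^2\lesssim J_h+1$ follows. The genuine gap is your step (3). A positive degree of $h$ on small disks, combined with the area formula, yields a \emph{lower} bound for $\iint_{B(z_0,r)}J_h$ in terms of the image area; to get the \emph{upper} bound $\iint_{B(z_0,r)}J_h\le Cr^2$ you would need $h(B(z_0,r))$ to lie in a disk of radius $Cr$, which is precisely the oscillation estimate you are trying to prove -- the argument is circular at its core. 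Moreover, the Hopf-type quantity $\Phi=h_z\overline{h_{\bar z}}$ being bounded and "nearly holomorphic" gives no pointwise control of $h_z$ where $h_{\bar z}$ is small, so it cannot by itself drive a Morrey/Caccioppoli decay for $J_h$. A further warning sign is that your sketch uses the H\"older condition \eqref{HoldCond} only as a soft perturbation ("$\Phi$ in a BMO/fractional class"), whereas the paper's example $h(z)=z\log\log\abs{z}^{-2}$ (an orientation-preserving homeomorphism with \emph{continuous} Hopf product that is not Lipschitz) shows that any proof which does not use the H\"older modulus quantitatively must fail.

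The paper's actual mechanism is different and avoids both obstacles. Using the H\"older condition in an essential way, it constructs (Proposition~\ref{GoodSol}) a family of "good" solutions $F^\lambda(z)=\lambda z+f^\lambda(z)$ of $f_{\bar z}=\mathcal H(z,\lambda+f_z)$ by a contraction argument in a Besov space $\mathscr B^p_\alpha$ (Cauchy and Beurling--Ahlfors transforms), with Lipschitz bounds on $f^\lambda$ uniform in $\lambda$. The Lipschitz condition \eqref{LipCond} then makes the difference $g^\lambda=F^\lambda-h$ quasiregular for $\abs{\lambda}$ large (Proposition~\ref{distpro}), hence continuous, open and discrete -- so degree theory is applied to $g^\lambda$, not to $h$, and your worry about a continuous representative of $h$ evaporates (continuity of $h$ comes for free from $h=\lambda z+f^\lambda-g^\lambda$). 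A Rouch\'e-type degree argument (Lemma~\ref{Rouche's Lemma}) gives injectivity of $g^\lambda$ on a fixed subdisk for all large $\abs{\lambda}$, and a winding-number argument in the $\lambda$-variable (Lemma~\ref{TheInequalitylem}) converts this non-vanishing into the pointwise inequality $\abs{h(z_1)-h(z_2)}\le\abs{\lambda(z_1-z_2)+f^\lambda(z_1)-f^\lambda(z_2)}$ for some $\lambda$ on a circle of controlled radius, which is the Lipschitz bound directly -- no Morrey iteration, no decay of $\iint J_h$. To repair your plan you would have to replace step (3) by some construction of comparison solutions playing the role of $F^\lambda$; as written, the proposal does not contain the key idea.
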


A special case of Theorem~\ref{MainTheorem} deserves a separate mention because it covers the variant of~\eqref{HLe} with not necessarily analytic right-hand side $\phi$.

 \begin{theorem}\label{LipHopf2}
Let $h\in \mathscr W^{1,2}(\Omega)$ be  a mapping with nonnegative Jacobian.
Suppose that the Hopf product $h_z\,\overline{h_{\bar z}}$ is bounded and H\"{o}lder continuous.
Then $h$ is locally Lipschitz but not necessarily $\mathscr C^1$-smooth.
\end{theorem}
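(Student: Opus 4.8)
The plan is to deduce Theorem~\ref{LipHopf2} directly from Theorem~\ref{MainTheorem} by recasting the Hopf-Laplace identity as an equation of type~\eqref{TheEquat}. Write $\phi = h_z\,\overline{h_{\bar z}}$ for the Hopf product. The hypothesis that the Jacobian $J_h = |h_z|^2 - |h_{\bar z}|^2$ be nonnegative forces $|\phi| = |h_z|\,|h_{\bar z}| \le |h_z|^2$ a.e., and since $\phi$ is bounded (and, after passing to a relatively compact subdomain if necessary --- the conclusion being local --- globally $\alpha$-H\"older) I would set $R := \bigl(\sup_{\Omega}|\phi|\bigr)^{1/2} < \infty$. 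On the set $\{\,|h_z| > R\,\}$ one has $h_z \ne 0$, and conjugating the a.e.\ identity $h_z\,\overline{h_{\bar z}} = \phi$ gives $\overline{h_z}\,h_{\bar z} = \overline{\phi}$, that is
\begin{equation*}
h_{\bar z} = \mathcal H(z, h_z), \qquad \mathcal H(z,\xi) := \overline{\phi(z)/\xi} = \frac{\overline{\phi(z)}}{\bar\xi}\,.
\end{equation*}
This $\mathcal H$ is continuous on $\Omega \times \widehat{\C}_R$, with $\mathcal H(z,\infty) = 0$, and $h$ is a solution of~\eqref{TheEquat} in the sense of the Definition, no requirement being imposed where $|h_z| \le R$.

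It remains to check the two structural conditions for this $\mathcal H$. Condition~\eqref{LipCond} is immediate:
\begin{equation*}
|\mathcal H(z,\xi_1) - \mathcal H(z,\xi_2)| = |\phi(z)|\,\Bigl|\tfrac{1}{\xi_1} - \tfrac{1}{\xi_2}\Bigr| \le \bigl(\sup_\Omega|\phi|\bigr)\,\Bigl|\tfrac{1}{\xi_1} - \tfrac{1}{\xi_2}\Bigr|,
\end{equation*}
so~\eqref{LipCond} holds with $L = \sup_\Omega|\phi| = R^2$. For~\eqref{HoldCond}, whenever $|\xi| > R$ one has $\sup_z|\mathcal H(z,\xi)| \le R^{-1}\sup_\Omega|\phi| = R$, while
\begin{equation*}
\frac{|\mathcal H(z_1,\xi) - \mathcal H(z_2,\xi)|}{|z_1 - z_2|^\alpha} = \frac{|\phi(z_1) - \phi(z_2)|}{|\xi|\,|z_1 - z_2|^\alpha} \le \frac{[\phi]_\alpha}{R}\,,
\end{equation*}
where $[\phi]_\alpha$ denotes the $\alpha$-H\"older seminorm of $\phi$; hence~\eqref{HoldCond} holds with $M = R + R^{-1}[\phi]_\alpha$. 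As $h \in \W^{1,2}$ has nonnegative Jacobian, Theorem~\ref{MainTheorem} then gives that $h$ is locally Lipschitz.

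For the sharpness clause I would record an explicit Lipschitz solution that fails to be $\mathscr C^1$. On the unit disk $\DD$ take $h(z) = |\im z|$; then $h_z = -\tfrac{i}{2}\sign(\im z)$ and $h_{\bar z} = \tfrac{i}{2}\sign(\im z)$, so $J_h \equiv 0 \ge 0$ and the Hopf product is the constant $h_z\,\overline{h_{\bar z}} \equiv -\tfrac14$, in particular bounded and H\"older, yet $h$ is differentiable at no point of $\{\im z = 0\}$. (Up to normalization this is the same folding example that obstructs $\mathscr C^1$-regularity in Theorem~\ref{MainTheorem}.)

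I expect essentially all the substance to lie inside Theorem~\ref{MainTheorem} --- the topological-degree argument and the bootstrap from $\W^{1,2}$ to Lipschitz --- the reduction above being little more than the change of variables $\xi \mapsto \overline{\phi}/\bar\xi$. The one place that genuinely exploits the architecture of the Definition is the behaviour near $h_z = 0$: the putative right-hand side $\overline{\phi}/\overline{h_z}$ is unbounded there, and it is precisely the choice of threshold $R$ from $\sup_\Omega|\phi|$, together with the a~priori bound $|h_{\bar z}| \le |h_z| \le R$ forced by $J_h \ge 0$ on the complementary set, that keeps the problem within the hypotheses of Theorem~\ref{MainTheorem}.
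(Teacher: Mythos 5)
Your reduction is exactly the paper's (implicit) route: Theorem~\ref{LipHopf2} is stated there as a special case of Theorem~\ref{MainTheorem}, obtained precisely by writing $h_{\bar z}=\overline{\phi(z)}/\overline{h_z}$ on the set $\{|h_z|>R\}$ with $R=(\sup_\Omega|\phi|)^{1/2}$, and your check of~\eqref{LipCond}--\eqref{HoldCond} with $L=\sup_\Omega|\phi|$ and $M=R+R^{-1}[\phi]_\alpha$ is the intended routine verification, correctly using $J_h\ge 0$ to control the region $\{|h_z|\le R\}$. Your non-$\mathscr C^1$ witness $h(z)=|\im z|$ (constant Hopf product, $J_h\equiv 0$) is simpler than the paper's Example in Section~\ref{elaborate} but serves the sharpness clause equally well.
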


More specific statements, including gradient estimates near $\,\partial\Omega\,$,  are presented in Theorem \ref{LipHopf3}. Examples of variational problems leading to equations in Theorem~\ref{MainTheorem} are provided in Section~\ref{invarsec}.

Our proofs draw upon the theory of Beltrami equations combined with methods of topology, a technique originated in the theory of general nonlinear first order elliptic systems ~\cite{AIMb, AIS, BI, Iw}. Initially, for the linear elliptic system
\[
h_{\bar z} = \mu \, h_z+\nu \, \overline{h_{\bar z}},\qquad \abs{\mu}+\abs{\nu}\le k<1
\]
  the solutions are quasiregular, a concept firmly rooted in the geometry  of analytic functions. While it might sound trivial, the observation that the difference of two solutions is again quasiregular is deep and useful, for it provides  us with powerful topological tools to obtain  existence, uniqueness and regularity of solutions. And this was exactly a blueprint for the definition of ellipticity of fully nonlinear Beltrami type equations,
\be{NonBeltr}
h_{\bar z} = \mathcal H(z, h_z)\;,\;\;\; |\mathcal H(z, \xi_1) - \mathcal H(z, \xi_2) | \leqslant  k |\xi_1 - \xi_2| \;, \;\;\;0 \leqslant k <1
\ee
Because of nonlinearity the difference of two solutions need not solve the same equation but it does solve another elliptic equation and, as such,  satisfies the  distortion inequality
\begin{equation}
| f_{\bar z} - g_{\bar z}\,| \;\leqslant \,k\,|f_z -  g_z |
\end{equation}
meaning that $f-g$ is quasiregular, see Definition~\ref{qrdef}.

A chief distinction from the elliptic cases discussed above is that the  solutions of~\eqref{TheEquat} need not be  quasiregular. However, we will construct a continuous family $\{F^\lambda \}_{\lambda \in \C}$ of ``good'' solutions of~\eqref{TheEquat} such that $F^\lambda -h$ are quasiregular.  After that we appeal to the topological properties of quasiregular mappings.  

We believe that these ideas will have applications to even more general PDEs than
those in our paper.  An interested reader  is referred to recent papers by D.~Faraco, B.~Kirchheim and L.~Sz\'ekelyhidi~\cite{FS, KS} which also combine the theory of quasiregular mappings with topological arguments.

\section{Inner-variational equations}\label{invarsec}

Let us consider the energy integral for mappings $\, h : \Omega \rightarrow \mathbb C\,$
\begin{equation}\label{equ21}
\mathscr E[h] = \iint _\Omega \mathbf E(z, h, h_z, h_{\bar z}) \,\dtext x_1\, \dtext x_2\;,\;\;\;\;\;\;\; z = x_1 + i x_2
\end{equation}
where $\,\mathbf E = \mathbf E(z, w, \xi, \zeta)\,$ is a given \textit{stored-energy function}.
From the point of view of Geometric Function Theory the mappings $\, h\,$ must take $\Omega\,$  onto a designated domain $\,\Omega^*\,$. In the elasticity theory these domains  are  referred to as the \textit{reference configuration} and \textit{deformed configuration}, respectively.  In the neo-Hookean model of hyperelasticity the stored-energy function blows up when the Jacobian of $\,h\,$ approaches zero. Thus one is looking for mapping $\,h\,$ with positive Jacobian determinant  which minimizes the energy. It is not always the case that  the minimizers satisfy the Euler-Lagrange equation. However, it is legitimate to  perform the inner variation of the energy integral.

Given any test function $\,\eta \in \mathscr C^\infty_0(\Omega)\,$ and a complex parameter $\,t\,$, small enough so that the map $ z\mapsto z + t\,\eta(z)\,$ represents a diffeomorphism of $\,\Omega\,$ onto itself,  consider the inner variation $\, h^t(z) = h( z + t \eta)\,$ and its energy
\begin{equation}\nonumber
\mathscr E[h^t] = \iint _\Omega \mathbf E(z, h^t, h^t_z, h^t_{\bar z}) \,\dtext x_1\, \dtext x_2
\end{equation}
First we make a substitution $\,w = z\,+\,t \,\eta(z)\,$ and then differentiate to obtain an integral form of the equilibrium equation $\,\frac{\partial}{\partial t }\big |_{t=0} \mathscr E[h^t]\, = 0 \,$ . We eliminate $\,\eta\,$ through integration by parts to arrive at  what is called the \textit{inner-variational equation}
\begin{equation}\label{VarEq}
\frac{\partial}{\partial \bar z} \Big [  h_z \,\mathbf E_\zeta \; + \;\overline{h_{\bar z}} \,\mathbf E_{\bar \xi}  \Big] \;+\; \frac{\partial}{\partial z} \Big [  h_z \,\mathbf E_\xi \; + \;\overline{h_{\bar z}} \,\mathbf E_{\bar \zeta}  \; -\; \mathbf E \, \Big] \;+\; \mathbf E_z\; = 0
\end{equation}
Here the subscripts under $\,\mathbf E \,$ stand for complex partial derivatives of  $\,\mathbf E = \mathbf E(z, w, \xi, \zeta)\,$. The partial derivatives $\,\frac{\partial}{\partial z}\,$ and $\, \frac{\partial}{\partial \bar z}\,$ are understood in the sense of distributions. Note that $\,\mathbf E_w\,$ does not enter this equation; meaning that  $\, \mathbf E\,$ is not required to be differentiable with respect to the $w$-variable. We say that $\,h\,$ is \textit{\textbf{inner stationary}} for $\,\mathscr E\,$ if it satisfies the equation~\eqref{VarEq}, regardless of whether $\,h\,$ is energy-extremal or not.

\subsection{Dirichlet integral}
The most basic example is the Dirichlet integrand $\mathbf E = |\xi|^2 + |\zeta|^2$ and the associated Hopf-Laplace
equation~\eqref{HLe},
\[
\frac{\partial}{\partial \bar z}\,\big( h_z\, \overline{h_{\bar z}}\,\big)  = 0\;,\;\;\;\text{for}\;\; h\in \mathscr W^{1,2}_{\textnormal{loc}}(\Omega)
\]

\subsection{Poincar\'e disk}\label{secpoin}
More general equations arise in the theory of harmonic mappings between Riemann surfaces~\cite{Jo, Job, Ma, Sc}. In particular, let the target be the Poincar\'{e} disk. This is the unit disk $\, \mathbb D =\{ w \in\mathbb C\colon  |w| < 1\,\}\,$ equipped with the hyperbolic metric
$\dtext s = \frac{\abs{\dtext w}}{1-\abs{w}^2}$. The associated Dirichlet integral
\[\,\mathscr E[h] = \iint _\Omega \frac{|h_z|^2 + |h_{\bar z}|^2}{(1 - |h|^2)^2}  \,\dtext x_1\, \dtext x_2
\]
is certainly infinite for homeomorphisms $\, h\,: \Omega \onto \mathbb D\,$ in the Sobolev space $\,\mathscr W^{1,2}_{\loc}(\Omega , \mathbb D)\,$. Nonetheless, it is  interesting to examine the inner-variational equation and all its  solutions, not necessarily homeomorphisms.
\[
\frac{\partial}{\partial \bar z}\, \frac{h_z\, \overline{h_{\bar z}}}{(1 - |h|^2)^2}\,  = 0\;,\;\;\;\text{for}\;\; h\in \mathscr W^{1,2}_{\textnormal{loc}}(\Omega, \mathbb D)
\]
For slightly greater generality we consider the weighted Dirichlet integral
\[
\mathscr E[h] = \iint _\Omega \big(\,|h_z|^2 + |h_{\bar z}|^2 \big ) \,\rho(z,h)  \,\dtext x_1\, \dtext x_2
\]
and its inner-variational equation
\begin{equation}\label{Var}
\frac{\partial}{\partial \bar z}\,\big [\,\rho(z, h)\,h_z\, \overline{h_{\bar z}}\,\big]\,  = \;\rho_z(z, h)\,(|h_z|^2 + |h_{\bar z}|^2 \,)\;,\;\;\;\text{for}\;\; h\in \mathscr W^{1,2}_{\textnormal{loc}}(\Omega, \mathbb D)
\end{equation}

\begin{theorem}\label{weightedHopf} Suppose $\rho = \rho(z, w) \geqslant 1$ is Lipschitz continuous in the $\,z$-variable and H\"{o}lder continuous in the $w$-variable. If  $h\in \mathscr W^{1,2}_{\textnormal{loc}}(\Omega, \mathbb D)$ is a solution of~\eqref{Var} with nonnegative Jacobian, then $h$ is locally Lipschitz continuous.
\end{theorem}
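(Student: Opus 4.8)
The plan is to recognise the inner-variational equation~\eqref{Var} as an instance of~\eqref{TheEquat} and then invoke Theorem~\ref{MainTheorem}. Let $h$ solve~\eqref{Var} with $J_h=|h_z|^2-|h_{\bar z}|^2\ge 0$, and write $\Phi:=h_z\,\overline{h_{\bar z}}$ for the (unweighted) Hopf product, so that $\widetilde\Phi:=\rho(z,h)\,\Phi$ is the quantity differentiated in~\eqref{Var}. Wherever $h_z\ne 0$ the definition of $\Phi$ gives the pointwise identity $h_{\bar z}=\overline{\Phi(z)}\,\big/\,\overline{h_z}$, while wherever $h_z=0$ the inequality $|h_{\bar z}|\le|h_z|$ forces $h_{\bar z}=0$; thus $h$ solves $h_{\bar z}=\mathcal H(z,h_z)$ in the sense of the Definition, with $R$ any positive number and $\mathcal H(z,\xi)=\overline{\Phi(z)}\,\big/\,\bar\xi$. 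For such an $\mathcal H$, on every subdomain $\Omega'\Subset\Omega$ the conditions~\eqref{LipCond} and~\eqref{HoldCond} hold with $L$ and $M$ controlled by the sup and the $\alpha$-H\"older seminorm of $\Phi$ over $\Omega'$, and $\mathcal H$ is continuous on $\Omega'\times\widehat{\mathbb C}_R$ as soon as $\Phi$ is continuous. Hence Theorem~\ref{MainTheorem} applies and yields the local Lipschitz continuity of $h$, provided the Hopf product $\Phi$ is locally bounded and locally H\"older continuous. As that is precisely the hypothesis of Theorem~\ref{LipHopf2}, the whole statement reduces to this one regularity claim for $\Phi$.

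For $\Phi$ I would argue from the equation. In the distributional sense~\eqref{Var} reads $\partial_{\bar z}\widetilde\Phi=\rho_z(z,h)\,(|h_z|^2+|h_{\bar z}|^2)$, where $\rho_z$ is the partial derivative of $\rho$ in its first variable evaluated along $h$. Since $\rho$ is Lipschitz in $z$ we have $|\rho_z|\le\Lambda$, and since $J_h\ge 0$ and $\rho\ge 1$ we get the pointwise bounds $|h_{\bar z}|^2\le|h_z|\,|h_{\bar z}|=|\Phi|\le|\widetilde\Phi|$ and $|h_z|^2=J_h+|h_{\bar z}|^2\le J_h+|\widetilde\Phi|$, whence
\[
\bigl|\,\partial_{\bar z}\widetilde\Phi\,\bigr|\;\le\;\Lambda\bigl(J_h+2\,|\widetilde\Phi|\bigr)\qquad\text{a.e.\ in }\Omega .
\]
Suppose, as will be arranged in the last paragraph, that $|Dh|^2\in L^p_{\loc}(\Omega)$ for some $p>2$. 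Then the right-hand side above lies in $L^p_{\loc}$, so representing $\widetilde\Phi$ via the Cauchy transform and using $\W^{1,p}_{\loc}\hookrightarrow C^{0,1-2/p}_{\loc}$ gives $\widetilde\Phi\in C^{0,1-2/p}_{\loc}$; at the same time $Dh\in L^{2p}_{\loc}$ with $2p>4$, hence $h$ is locally H\"older continuous, hence $z\mapsto\rho(z,h(z))$ is locally H\"older continuous and, the target being the disk, bounded between $1$ and a constant. Dividing, $\Phi=\widetilde\Phi/\rho(z,h)$ is locally bounded and locally H\"older --- exactly what the previous paragraph needs. Thus everything rests on the higher integrability $|Dh|^2\in L^p_{\loc}$ for some $p>2$.

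This higher integrability is the main obstacle, and it is not available directly from~\eqref{Var} by a Caccioppoli/Gehring argument: inner-stationarity, in contrast to the unavailable Euler--Lagrange equation, produces no reverse-H\"older inequality for $|Dh|^2$, since testing~\eqref{Var} against functions built from $h$ yields cubic rather than quadratic expressions in $Dh$. Instead I would follow the topological route of the Introduction, parallel to the proof of Theorem~\ref{MainTheorem}. On a small disk $\mathbb B\Subset\Omega$ one solves the $\bar\partial$-equation for $\widetilde\Phi$ and constructs a continuous family $\{F^\lambda\}_{\lambda\in\C}$ of ``good'' solutions of the same first-order equation $F^\lambda_{\bar z}=\overline{\widetilde\Phi(z)}\,\big/\,\bigl(\rho(z,h(z))\,\overline{F^\lambda_z}\bigr)$, with well-controlled differentials, so arranged that each difference $F^\lambda-h$ obeys a linear distortion inequality $|(F^\lambda-h)_{\bar z}|\le k\,|(F^\lambda-h)_z|$ with $k<1$ and is therefore quasiregular; the hypotheses $J_h\ge 0$ and $\rho\ge 1$ are precisely what make this distortion inequality hold, exactly as the distortion inequality is what matters for the model systems following~\eqref{NonBeltr}. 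The topological properties of quasiregular mappings --- Gehring-type higher integrability together with the degree/argument-principle bookkeeping --- then pin down $Dh$: first in $L^p_{\loc}(\mathbb B)$ for some $p>2$, which by the previous paragraph makes $\Phi$ locally H\"older and hence Theorem~\ref{MainTheorem} applicable, giving the local Lipschitz bound. I expect the genuinely delicate step to be the construction of the family $\{F^\lambda\}$, and the verification that the differences $F^\lambda-h$ are quasiregular, when the weight $\rho(z,h(z))$ is authentically $w$-dependent and no more regular than what one has bootstrapped for $h$; interlacing this construction with the bootstrap of the previous paragraph is the heart of the argument.
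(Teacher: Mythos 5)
Your reduction of the theorem to H\"older continuity of the Hopf product (and thence to Theorem~\ref{LipHopf2} via $\mathcal H(z,\xi)=\overline{\Phi(z)}/\bar\xi$) is the same first move as in the paper, but the decisive step --- establishing that $\Phi$ (equivalently $U=\rho(z,h)\,h_z\overline{h_{\bar z}}$) is locally bounded and H\"older --- is precisely where your proposal has a genuine gap. You assert that higher integrability of $Dh$ ``is not available directly from~\eqref{Var}'' and must come from a quasiregular/topological construction; in fact the paper obtains everything needed directly from the equation, with no topology and no Gehring lemma at this stage. One regards~\eqref{Var} as the inhomogeneous Cauchy--Riemann system~\eqref{Cauchy}, $U_{\bar z}=u$ with $|u|\lesssim |h_z|^2+|h_{\bar z}|^2$, and bootstraps: $u\in\mathscr L^1_{\loc}$ already forces $U\in\mathscr L^s_{\loc}$ for every $s<2$; the pointwise inequality $|h_{\bar z}|^2\leqslant\rho\,|h_z|\,|h_{\bar z}|=|U|$ (this is where $\rho\geqslant 1$ and $J_h\geqslant 0$ enter) upgrades $|h_{\bar z}|^2$, and then $|h_z|^2$, to $\mathscr L^s_{\loc}$; feeding this back gives $u\in\mathscr L^s_{\loc}$, hence $U\in\mathscr L^{2s/(2-s)}_{\loc}$ with $2s/(2-s)>2$, and one more pass gives $u\in\mathscr L^p_{\loc}$ with $p>2$, whence $U\in\mathscr C^\alpha_{\loc}$ and $\nabla h\in\mathscr L^{2p}_{\loc}$, so $h$, then $\rho(z,h)$, and finally $h_z\overline{h_{\bar z}}=U/\rho(z,h)$ are H\"older and bounded. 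Only after this does one invoke Theorem~\ref{LipHopf2}, where the topological machinery lives.

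The alternative route you sketch is, as stated, circular and not executable. The construction of the good solutions $F^\lambda$ (Section~\ref{goodsol}, Proposition~\ref{GoodSol}) requires the structural H\"older condition~\eqref{HoldCond} on $z\mapsto\mathcal H(z,\xi)$ --- for your weighted equation this means H\"older continuity of $\widetilde\Phi$ and of $z\mapsto\rho(z,h(z))$, which is exactly what you are trying to prove; likewise the distortion inequality for $F^\lambda-h$ needs $\|h_{\bar z}\|_{\mathscr L^\infty}<\infty$, which in this setting comes only from $|h_{\bar z}|^2\leqslant|U|$ and the boundedness of $U$, again a product of the bootstrap you have deferred. Moreover, higher integrability of $D(F^\lambda-h)$ from quasiregularity would transfer to $Dh$ only once $DF^\lambda$ is under control, which is again Proposition~\ref{GoodSol}. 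So the ``heart of the argument'' you postpone cannot be started from the data you have, whereas the elementary $\bar\partial$-bootstrap above is the missing ingredient that makes the whole proof close.
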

\begin{proof}
Generally speaking we are dealing with a nonhomogeneous Cauchy-Riemann equation
\begin{equation}\label{Cauchy}
\frac{\partial U}{\partial \bar z} =  u \,,\;\;\;\; \text{where}\;\;\; U = \rho(z , h )\, h_z \overline{h_{\bar z}} \,\,,\;\;\; u = \big( |h_z|^2 + |h_{\bar z}|^2 \big)\,\rho_z (z , h )
\end{equation}
The equation is elliptic, so we gain some regularity properties of $U$ and $u$. At the beginning we only know that $U,u  \in \mathscr L^1_{\textnormal{loc}}(\Omega)$. We shall recurrently improve integrability properties of these terms. First observe that $U$,  having $\frac{\partial }{\partial \bar z}\,$-derivative in $\,\mathscr L^1_{\textnormal{loc}}(\Omega)$, lies in $ \mathscr L^s_{\textnormal{loc}}(\Omega)$  for every exponent $\,1 < s < 2\,$. Then, in view of pointwise inequality $\, |h_{\bar z}|^2  \leqslant
\rho(z , h )\, |h_z|  |h_{\bar z}| = \abs{U}$, we see that $\,|h_{\bar z}|^2 \in  \mathscr L^s_{\textnormal{loc}}(\Omega)\,$. This implies that also  $\,|h_z|^2 \in  \mathscr L^s_{\textnormal{loc}}(\Omega)\,$. In this way we  gain higher integrability of the right hand side of~\eqref{Cauchy}; namely, $u = \big( |h_z|^2 + |h_{\bar z}|^2 \big)\,\rho_z (z , h ) \in \mathscr L^s_{\textnormal{loc}}(\Omega)\,$, because $\,\rho_z(z, h)\,$ is bounded. Now equation~\eqref{Cauchy} places $U$ in the space
$\, \mathscr L^{\frac{2s}{2-s}}_{\textnormal{loc}}(\Omega)\,$. This, in view of $\,|h_{\bar z}|^2 \leqslant \abs{U}$, yields $\,|h_{\bar z}|^2 \in \mathscr L^{\frac{2s}{2-s}}_{\textnormal{loc}}(\Omega)\,$. Hence $\,|h_{\bar z}|^2 \in  \mathscr L^{\frac{2s}{2-s}}_{\textnormal{loc}}(\Omega)\,$ as well. Thus we gained more integrability  of $u$; namely, $u \in
\mathscr L^{p}_{\textnormal{loc}}(\Omega)\,$, with $\, p = \frac{2s}{2-s}  > 2\,$. We again turn back to equation~\eqref{Cauchy}. This time the equation yields H\"{o}lder continuity of $U$; that is,  $\,U\in \mathscr C^\alpha_{\textnormal{loc}}(\Omega)\,$ with $\, \alpha = 1 - \frac{2}{p} > 0 \,$. Let us write the equation as
\[
h_z \,\overline{h_{\bar z}} =  \frac{\psi(z)}{\rho(z, h)}\;,\;\;\;\text{where}\;\;\;\psi \in \mathscr C^\alpha_{\textnormal{loc}}(\Omega)
\]
We observe that $\,h\,$ is also locally H\"{o}lder continuous, because $\,h_{\bar z} \in \mathscr L^{2p}_{\textnormal{loc}}(\Omega)$ with exponent $2p > 2$. The conclusion is  that the \textit{Hopf product} $\,h_z \,\overline{h_{\bar z}}\,$ is a H\"{o}lder continuous function.  By Theorem~\ref{LipHopf2},   $\, h\,$ is locally Lipschitz.
\end{proof}

\subsection{An application to nonlinear elasticity}\label{secapp}
We now turn to some fairly general energy integrals of interest in nonlinear elasticity. Given two bounded domains $\Omega$ and $\Omega^\ast$ in $\C$, we consider mappings $h \colon \Omega \to \Omega^\ast$ of Sobolev class $\W^{1,2} (\Omega)$ whose Jacobian determinant $J_h\deff \det Dh= \abs{h_z}^2- \abs{h_{\bar z}}^2$ is nonnegative. In nonlinear elasticity of isotropic materials one considers the energy of $h$ of the form
\[\mathscr E[h] = \iint_\Omega W\big(z, h, \abs{h_z}^2, \abs{h_{\bar z}}^2 \big)\]
Specifically, neo-Hookean models of elasticity~\cite{Ba0} deal with the integrands $W$ which blow up as the Jacobian determinant approaches zero.  For the sake of simplicity we forgo the dependence of $W$ on the $z$ and $h$ variables. The interested reader may generalize our considerations by including $z$ and $h$ variables to the integrand, like in \S\ref{secpoin}. For the energy integrand
\[\mathbf E (Dh)= W \big( \abs{h_z}^2, \abs{h_{\bar z}}^2\big) \]
 the inner-variational equation~\eqref{VarEq} simplifies as follows
\begin{equation}\label{starstar}
\frac{\partial}{\partial \bar z} \left[\big(W_a+W_b\big) h_z \overline{h_{\bar z}} \,  \right]+ \frac{\partial}{\partial z} \left[\abs{h_z}^2 W_a + \abs{h_{\bar z}}^2 W_b -W \right]=0
\end{equation}
where $W$ and its partial derivatives $W_a$ and $W_b$ are evaluated at $a=\abs{h_z}^2$ and $b=\abs{h_{\bar z}}^2$.
To emphasize a possible neo-Hookean character of the integrand we express it as
\[W(a,b)=\frac{F(a,b)}{(a-b)^{p-1}} \]
so
\begin{equation}\label{energyfunction}
\mathscr E[h] = \iint_\Omega \mathbf E (Dh) = \iint_\Omega \frac{F\big( \abs{h_z}^2, \abs{h_{\bar z}}^2 \big)}{ (\abs{h_z}^2 - \abs{h_{\bar z}}^2 )^{p-1} }\;,\;\;\;p \ge 1
\end{equation}
where $F=F(a,b)$ is defined and continuous in $\,\overline{\mathbb O}= \{(a,b) \colon a\ge b\ge0\}\,$ -the closure of the first octant $\,\mathbb O \deff \{(a,b) \colon a>b>0\}\,$. Let us remove the corner of $\,\overline {\mathbb O}\,$ to introduce  $\overline{\mathbb O}_\circ \deff \overline{\mathbb O} \setminus \{(0,0)\}\,$. As usual, we write
\[
\begin{split}
\abs{\nabla F}&= \abs{F_a} + \abs{F_b} \quad \mbox{and} \quad \abs{\nabla^2 F}= \abs{F_{aa}}+ \abs{F_{ab}}+ \abs{F_{bb}}
\end{split}
\]
We make the following standing assumptions on $F  \colon \overline{\mathbb O} \to [0, \infty )\,$:
\begin{align}
& F\in \mathscr C (\overline{\mathbb O}) \cap \mathscr C^2 (\overline{\mathbb O }\setminus \{(0,0)\}) \label{c1} \\
& \mbox{$F$ is homogeneous of degree } p \label{c2} \,\mbox{; that is, } \\
& F(ta,tb)=t^{p} F(a,b) \qquad \mbox{for } t \ge 0\,,  \quad a \ge b \ge 0. \nonumber
\end{align}
Furthermore, for $\,a>b>0\,$, we assume that
\begin{align}
 (a+b)^{p} \lesssim \, & F \lesssim (a+b)^{p} \label{c3}\\
 (a+b)^{p-1} \lesssim \, & F_a+F_b \le \abs{\nabla F} \lesssim (a+b)^{p-1} \label{c4} \\
& \abs{\nabla^2 F} \lesssim (a+b)^{p-2} \label{c5}
\end{align}
Here the notation $\lesssim$ refers to an inequality with the  implied constant (positive) which stays independent of $(a,b) \in \overline{\mathbb O}$.
\begin{theorem}
Let $h\in \W^{1,1}_{\loc} (\Omega)$ be an inner-stationary mapping for the energy integral~\eqref{energyfunction} with $\mathscr E[h]<\infty$, where $F$ satisfies the conditions~\eqref{c1}--\eqref{c5}.  Then $h$ is locally Lipschitz continuous. Furthermore $\mathbf E (Dh)$ is locally bounded.
\end{theorem}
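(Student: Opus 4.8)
The plan is to use the homogeneity of $F$ to collapse the divergence-form equation~\eqref{starstar} into a single scalar $\bar\partial$-equation, and then to recognize the outcome as an instance of~\eqref{TheEquat} and invoke Theorem~\ref{MainTheorem}. Write $a=\abs{h_z}^2$, $b=\abs{h_{\bar z}}^2$ and $W=F/(a-b)^{p-1}$. A direct differentiation gives $W_a+W_b=(F_a+F_b)(a-b)^{-(p-1)}$ and $aW_a+bW_b=\bigl(aF_a+bF_b-(p-1)F\bigr)(a-b)^{-(p-1)}$; since $F$ is homogeneous of degree $p$, Euler's identity $aF_a+bF_b=pF$ (legitimate on $\overline{\mathbb O}_\circ$ by~\eqref{c1}--\eqref{c2}) turns the latter into $F(a-b)^{-(p-1)}=W$. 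Hence the second bracket in~\eqref{starstar}, $\abs{h_z}^2W_a+\abs{h_{\bar z}}^2W_b-W$, vanishes pointwise a.e., and~\eqref{starstar} reduces to
\[
\frac{\partial}{\partial\bar z}\bigl[(W_a+W_b)\,h_z\,\overline{h_{\bar z}}\bigr]=0 .
\]
Two elementary consequences of~\eqref{c3}--\eqref{c4} make this rigorous: first $\mathbf E(Dh)=W\gtrsim(a+b)^p(a+b)^{-(p-1)}=a+b\simeq\abs{Dh}^2$, so $\mathscr E[h]<\infty$ forces $h\in\W^{1,2}_{\loc}(\Omega)$; second $\bigl|(W_a+W_b)h_z\overline{h_{\bar z}}\bigr|=\frac{(F_a+F_b)\sqrt{ab}}{F}\,\mathbf E(Dh)\lesssim\mathbf E(Dh)\in\mathscr L^1_{\loc}(\Omega)$. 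Therefore $\phi\deff(W_a+W_b)\,h_z\,\overline{h_{\bar z}}$ is an $\mathscr L^1_{\loc}$ distributional solution of $\phi_{\bar z}=0$, hence holomorphic in $\Omega$.

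Next I would put the equation into Beltrami form. Since $J_h\ge0$, $\abs{\mu}\le1$ for $\mu\deff h_{\bar z}/h_z$, and homogeneity makes $W_a+W_b$ a function of $\abs{\mu}^2$ only,
\[
W_a+W_b=\frac{(F_a+F_b)(1,\abs{\mu}^2)}{(1-\abs{\mu}^2)^{p-1}}=:G(\abs{\mu}^2),
\]
where $G$ is positive and increasing on $[0,1)$ with $G(0)=(F_a+F_b)(1,0)>0$, with $G$ and $G'$ bounded near $0$ by~\eqref{c4}--\eqref{c5}, and with $t\mapsto t\,G(t^2)$ increasing from $0$ to $\infty$ (to a finite value when $p=1$). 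Using $h_z\overline{h_{\bar z}}=\abs{h_z}^2\overline{\mu}$ the equation becomes $\abs{h_z}^2\,\overline{\mu}\,G(\abs{\mu}^2)=\phi(z)$. Fix $\Omega'\Subset\Omega$ and put $M_0\deff\sup_{\Omega'}\abs{\phi}<\infty$. Where $\abs{h_z}>R$ one has $\abs{\mu}\,G(\abs{\mu}^2)=\abs{\phi}/\abs{h_z}^2\le M_0/R^2$, so $\abs{\mu}\le\varepsilon_R$ with $\varepsilon_R\to0$ as $R\to\infty$, and on $\{\abs{\mu}\le\varepsilon_R\}$ the map $\mu\mapsto\abs{h_z}^2\overline{\mu}\,G(\abs{\mu}^2)$ is injective. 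Hence for $R$ large the relation $\xi\,\overline{\zeta}\,G(\abs{\zeta}^2/\abs{\xi}^2)=\phi(z)$ has, for each $(z,\xi)\in\Omega'\times\widehat{\mathbb C}_R$, a unique solution $\zeta=\mathcal H(z,\xi)$ with $\abs{\zeta}<\abs{\xi}$, satisfying
\[
\mathcal H(z,\xi)=\frac{\overline{\phi(z)}}{\overline{\xi}\;G\bigl(\abs{\mathcal H(z,\xi)}^2/\abs{\xi}^2\bigr)},
\]
a small perturbation of $\xi\mapsto\overline{\phi(z)}/(G(0)\,\overline{\xi})$ once $\abs{\xi}>R$. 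A routine implicit-function estimate then gives~\eqref{LipCond} with $L\simeq M_0/G(0)$, while $\phi$ being holomorphic (so bounded with all derivatives on $\overline{\Omega'}$) gives~\eqref{HoldCond} with $M\simeq M_0$ and any exponent $\alpha\in(0,1]$. Because $\phi$ coincides a.e. with $(W_a+W_b)h_z\overline{h_{\bar z}}$ and the elliptic branch is unique, $h_{\bar z}=\mathcal H(z,h_z)$ a.e. on $\{\abs{h_z}>R\}$; thus $h|_{\Omega'}$ is a solution of~\eqref{TheEquat} with nonnegative Jacobian, and Theorem~\ref{MainTheorem}, applied on $\Omega'$, yields $h\in\Lip_{\loc}(\Omega')$. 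Since $\Omega'$ was arbitrary, $h$ is locally Lipschitz. (One cannot quote Theorem~\ref{LipHopf2} directly: the Hopf product $h_z\overline{h_{\bar z}}=\phi/G(\abs{\mu}^2)$ is divided by a weight that need not be continuous.)

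For the last assertion, fix $\Omega'\Subset\Omega$ and let $C=\norm{Dh}_{\mathscr L^\infty(\Omega')}$, now finite. Write $\mathbf E(Dh)=\abs{h_z}^2H(\abs{\mu}^2)$ with $H(t)=F(1,t)(1-t)^{-(p-1)}$. On $\{\abs{\mu}^2\le\tfrac12\}$ the factor $H$ is bounded (by~\eqref{c3}), so $\mathbf E(Dh)\lesssim\abs{h_z}^2\le C^2$; on $\{\abs{\mu}^2>\tfrac12\}$ one has $H(\abs{\mu}^2)\simeq G(\abs{\mu}^2)$ (both $\simeq(1-\abs{\mu}^2)^{-(p-1)}$ by~\eqref{c3}--\eqref{c4}), hence $\mathbf E(Dh)\lesssim\abs{h_z}^2G(\abs{\mu}^2)\le\sqrt2\,\abs{h_z}^2\abs{\mu}\,G(\abs{\mu}^2)=\sqrt2\,\abs{\phi}\le\sqrt2\,M_0$. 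In either case $\mathbf E(Dh)$ is bounded on $\Omega'$.

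The \emph{main obstacle} is the reduction to~\eqref{TheEquat} in the second step: one has to show that the implicit relation $\xi\,\overline{\zeta}\,G(\abs{\zeta/\xi}^2)=\phi(z)$ genuinely defines a single-valued continuous function $\mathcal H$ on $\Omega'\times\widehat{\mathbb C}_R$ satisfying~\eqref{LipCond} and~\eqref{HoldCond} with constants depending only on the structural data~\eqref{c1}--\eqref{c5} and on $\sup_{\Omega'}\abs{\phi}$. The trouble is that the weight $G=W_a+W_b$ is neither constant nor bounded --- it degenerates as the distortion tends to $\infty$ --- so one must exploit the a priori smallness of $\abs{\mu}$ on $\{\abs{h_z}>R\}$ to stay in the elliptic, nearly-affine regime and keep $\zeta$ inside the disc $\abs{\zeta}<\abs{\xi}$. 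A secondary, more technical, point is making the very first step rigorous for a map that a priori lies only in $\W^{1,1}_{\loc}$: one needs the individual terms in~\eqref{starstar} to be locally integrable so that the cancellation $aW_a+bW_b-W\equiv0$ is legitimate inside $\partial/\partial z$; this rests on~\eqref{c3}--\eqref{c4} and $\mathscr E[h]<\infty$, and is exactly what upgrades $h$ to $\W^{1,2}_{\loc}$.
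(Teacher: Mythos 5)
Your proposal is correct and follows essentially the same route as the paper: Euler's identity for the $p$-homogeneous $F$ kills the $\partial/\partial z$-bracket, Weyl's lemma produces the analytic function $\phi$, the pointwise bound $\abs{h_{\bar z}}^2\lesssim\abs{\phi}$ gives local boundedness of $h_{\bar z}$, the scalar relation is inverted near $k=0$ (your local injectivity of $\mu\mapsto\overline{\mu}\,G(\abs{\mu}^2)$ is the paper's implicit-function step $\Phi'(0)=F_a(1,0)+F_b(1,0)\gtrsim1$, $k=s\,\Gamma(s)$) to put $h$ into the form~\eqref{TheEquat} on $\{\abs{h_z}\ge R\}$ with $R$ determined by $\sup_{\Omega'}\abs{\phi}$, after which Theorem~\ref{MainTheorem} and the bound $\mathbf E(Dh)\lesssim\abs{\phi}+\abs{h_z}^2$ finish exactly as in the paper. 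Your side claim that $G$ is increasing on $[0,1)$ is neither justified by~\eqref{c1}--\eqref{c5} nor needed, since only injectivity for small $\abs{\mu}$ (from $G(0)>0$ and $\mathscr C^1$-regularity near $0$) is used.
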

\begin{proof}
First observe that
\begin{equation}
\iint_\Omega \abs{Dh}^2 \lesssim \mathscr E[h]< \infty
\end{equation}
hence $h\in \W^{1,2} (\Omega)$. Regarding inner variation, we apply formula~\eqref{starstar} to $W(a,b)= \frac{F(a,b)}{(a-b)^{p-1}}$ to see that
$\,a\, W_a + b\, W_b -W =0\,$, because of $p$-homogeneity of $\,F\,$. The equation~\eqref{starstar} takes the form
\begin{equation}\label{equ13}
\frac{\partial}{\partial \bar z} \left[\frac{F_a+F_b}{(a-b)^{p-1}}\,  h_z \overline{h_{\bar z}}  \right]=0
\end{equation}
where we note that
\[
\begin{split}
\left| \frac{F_a+F_b}{(a-b)^{p-1}}\,  h_z \overline{h_{\bar z}}   \right| &  \lesssim \frac{(a+b)^{p-1} \sqrt{a\, b}}{(a-b)^{p-1}} \lesssim \frac{(a+b)^{p}}{(a-b)^{p-1}} \lesssim \frac{F(a,b)}{(a-b)^{p-1}}\\ & \lesssim \mathbf E (Dh) \in \mathscr L^1(\Omega)
\end{split}
 \]
Thus, by Weyl's lemma,~\eqref{equ13} yields that
\begin{equation}\label{eq9}
\frac{F_a+F_b}{(a-b)^{p-1}} \, h_z \overline{h_{\bar z}}= \phi \quad \mbox{is an analytic function, actually in }\; \mathscr L^1(\Omega)\,.
\end{equation}
We also note at this point that  in view of condition~\eqref{c4}
\begin{equation}
\abs{h_{\bar z}}^2 \le \abs{h_z} \abs{h_{\bar z}} \lesssim \left| \frac{F_a+F_b}{(a+b)^{p-1}} h_z \overline{h_{\bar z}}   \right|  \le \left| \frac{F_a+F_b}{(a-b)^{p-1}} h_z \overline{h_{\bar z}}   \right| = \abs{\phi}
\end{equation}
Hence $h_{\bar z} \in  \mathscr L^\infty_{\loc} (\Omega)$. 

 We are going to solve~\eqref{eq9} for $h_{\bar z}$ in terms of $\phi$ and $h_z$, at least when $\abs{h_z}$ is sufficiently large. Choose and fix an arbitrary subdomain $\Omega' \Subset \Omega$ and let
\[N^2 \deff \|\phi \|_{\mathscr L^\infty (\Omega')}  <\infty\]
Since $F_a+F_b$ is homogeneous of degree $\,p-1\,$, equation~\eqref{eq9} can be written as
\begin{equation}\label{eq12}
\frac{F_a (1,k^2)+ F_b (1,k^2)}{(1-k^2)^{p-1}} \frac{\overline{h_{\bar z}}}{h_z} = \frac{\phi}{h_z^2}
\end{equation}
where $0 \le k= \frac{\abs{h_{\bar z}}}{\abs{h_z}}  \le \nicefrac{1}{2}$\,, provided
\begin{equation}
\abs{h_z} \ge 2N.
\end{equation}
Taking the absolute values of both sides in the equation~\eqref{eq12}, we obtain
\begin{equation}\label{eq18}
\frac{F_a(1,k^2)+F_b(1,k^2) }{(1-k^2)^{p-1}}\, k = \frac{\abs{\phi}}{\abs{h_z}^2}\deff s
\end{equation}
The left hand side represents a $\mathscr C^1$-smooth function $\Phi = \Phi (k)$, $0<k \le \nicefrac{1}{2}$\,. We see that
\[\Phi' (0) = F_a (1,0)+ F_b (1,0) \gtrsim 1\]
by the condition~\eqref{c4}. Therefore~\eqref{eq18} admits unique solution for $\,k\,$ close to 0, say
\[k=s\,  \Gamma (s),\]
provided $0 \le k \le k_\circ \leqslant \nicefrac{1}{2}$, where $\Gamma = \Gamma (s)$ is a $\mathscr C^1$-function in a small interval $\,0 \le s \le s_\circ$. Now the equation~\eqref{eq12} reads as,
\begin{equation}
\overline{h_{\bar z}} = \frac{\phi}{h_z} \Gamma \Big( \frac{\abs{\phi}}{\abs{h_z}^2}\Big), \quad \mbox{whenever } \abs{h_z} \ge \frac{N}{k_\circ}
\end{equation}
Thus we arrive  at the equation of the form ~\eqref{TheEquat}, where
\[\mathcal H (z, \xi) = \frac{\overline{\phi (z)}}{\overline{\, \xi \, }} \Gamma \Big( \frac{\abs{\phi (z)}}{ \abs{\xi}^2}\Big) \;,\quad \mbox{for } \abs{\xi} \ge R \deff \frac{N}{k_\circ},\]
All the conditions on $\mathcal H$ in Theorem~\ref{MainTheorem} are satisfied, so
\[\abs{\nabla h} \in \mathscr L^\infty_{\loc} (\Omega').\]
It remains to estimate the integrand. We have the identity
\[\frac{\mathbf E (Df)}{\abs{\phi}+ \abs{h_z}}  = \frac{F(1,k^2)}{(1-k^2)^{p-1} + \left[F_a (1,k^2) + F_b (1,k^2) \right]k}\]
regardless of whenever $\abs{h_z} \ge R\,$ holds or not, where $k= \frac{\abs{h_{\bar z}}}{\abs{h_z}} \in [0,1]$. The right hand side is bounded. Therefore,
\[\mathbf E (Dh) \lesssim \abs{\phi} + \abs{h_z} \in \mathscr L^\infty_{\loc} (\Omega) \qedhere\]
\end{proof}
\subsubsection{An example}
The class of energies in~\eqref{energyfunction} covers the following particular integral,
\[\mathscr E_\Omega [h]= \iint_\Omega \frac{\abs{Dh}^{2p}}{J_h^{\, p-1}} = \iint_{\Omega} \frac{\left( \abs{h_z}^2 + \abs{h_{\bar z}}^2 \right)^p}{\left( \abs{h_z}^2 - \abs{h_{\bar z}}^2
\right)^{p-1}}, \quad p \ge 1\]
subject to homeomorphisms $h \colon \Omega \onto \Omega^\ast$ in the Sobolev space $\W^{1,2} (\Omega)$. This case gains additional interest in Geometric Function Theory because the transition to the energy of the inverse mapping $f=h^{-1} \colon \Omega^\ast \onto \Omega$ results in the  $\mathscr L^p$-norm of the distortion function
\[\mathscr E_{\Omega^\ast } [f] = \iint_{\Omega^\ast} K_f^p\;, \qquad K_f= \frac{\abs{Df}^2}{J_f}\,\geqslant 1 \]
We see that conformal mappings, for which $\,K_f \equiv 1\,$, are the absolute minimizers. In general, $\,\mathscr L^p$-integrability of the distortion function only guarantees that $f\in \W^{1, \frac{2p}{p+1}}(\Omega^\ast)$. Indeed,
\[
\begin{split}
\iint_{\Omega^\ast} \abs{Df}^\frac{2p}{p+1} &= \iint_{\Omega^\ast} K_f^\frac{p}{p+1} \, J_f^\frac{p}{p+1} \le \left(  \iint_{\Omega^\ast} K_f^p\right)^\frac{1}{p+1} \left(\iint_{\Omega^\ast} J_f \right)^\frac{p}{p+1}\\
&=  \|K_f \|^{^\frac{p}{p+1}}_{_{\mathscr L^p (\Omega^\ast)}} \;\cdot \abs{\Omega}^\frac{p}{p+1} < \infty
\end{split}
\]
We do not pursue this matter further; see~\cite{AIM, AIMO, JM, Mar} for more on the minimization of $\|K_f\|_{\mathscr L^p}\,$.
\subsubsection{Nonisotropic energies}
The methods presented in this paper are also pertinent to some nonisotropic energies~\eqref{equ21}. By way of illustration, here is an example of such energy integral
\[\mathscr E [h]= \iint_{\Omega} \mathbf{E} (Dh) = \iint_\Omega \frac{F \big(\abs{h_z}^2, \abs{h_{\bar z}}^2, 2 \re h_z h_{\bar z}\big)}{\left(\abs{h_z}^2- \abs{h_{\bar z}}^2\right)^{p-1}}\;, \quad p \ge 1\]
where $\,F=F(a,b,c)\,$ is a function of three real variables $\,a \ge b \ge 0\,$ and $\,c\in \R\,$. We assume, as in \S\ref{secapp}, that $\,F\,$ is homogeneous of degree $\,p\,$; that is,
\[F(ta, tb, tc)=t^p F(a,b,c)\]
An elementary but tedious computation reveals that the inner-variational equation~\eqref{VarEq} takes the form :
\[\frac{(F_a+F_b)h_z \overline{h_{\bar z}}+ (h_z^2+\overline{h_{\bar z}}^2)F_c}{\left( \abs{h_z}^2-\abs{h_{\bar z}} \right)^{p-1}}=\phi \qquad \mbox{-an analytic function in } \Omega\,.\]
We leave it to the interested reader to complete this investigation by imposing precise, fairly minimal, conditions on $\,F=F(a,b,c)\,$ in order to implement Theorem~\ref{MainTheorem}. The conclusion is that $h$ is locally Lipschitz and the integrand $\mathbf E (Dh)$ is locally bounded. An explicit example is:
\[F(a,b,c)= (a+b)^p+ \epsilon\, c^2 (a+b)^{p-2}, \qquad p \ge 2\]
with $\epsilon > 0$ sufficiently small.

\section{Elaborate statement and examples}\label{elaborate}

 The nonnegativity  of Jacobian for a solution of~\eqref{TheEquat},
 under the structural conditions~\eqref{LipCond}--\eqref{HoldCond},  implies
  \begin{equation}
  \|h_{\bar z}\|_{\infty} \leqslant  M + \sqrt{L} + R
 \end{equation}
 For the sake of greater generality, and clarity of the arguments as well, we reformulate our main result
(Theorem~\ref{MainTheorem})
replacing the Jacobian condition  with the boundedness of $h_{\bar z}$.
Furthermore, we give a quantitative result with the sharp asymptotic bound  on the gradient of $h$
near the boundary. Define
\[\textnormal{\textsc{osc}}_{_\Omega} [h] \deff \sup_{a ,\, b\, \in \Omega} \big|\,h(a) - h(b)\,\big|\]
and
\[\abs{ \nabla h (z)}  \deff \abs{h_z(z)} + \abs{h_{\bar z}(z)}\]

\begin{theorem}\label{LipHopf3}
To every equation~\eqref{TheEquat} there corresponds a structural constant
$\lambda_\circ = \lambda_\circ(\mathcal{H})$ such that the following holds.
Suppose that a  function $h\in \mathscr W^{1,2}(\Omega)\cap \mathscr L^\infty(\Omega)$
with $h_{\bar z} \in \mathscr L^\infty(\Omega)$ satisfies~\eqref{TheEquat}.  Then $h$ is locally Lipschitz. Moreover, for almost every $z \in \Omega$ we have
\begin{equation}\label{mainineq}
  |\nabla h (z)|   \; \leqslant   \frac{3\,}{r}\, \textnormal{\textsc{osc}}_{_\Omega} [h]
 \;+\; 4\,\|h_{\bar z}\|_{\mathscr L^\infty (\Omega)}   \;+\; 6\,\lambda_\circ
\end{equation}
where $r= \min \{\textnormal{dist}(z , \partial \Omega) , 1\}$. In particular,
\[
\limsup_{z \rightarrow \,\partial \Omega} \;\; |\nabla h (z)|\cdot \textnormal{dist}(z , \partial \Omega)\, \,\leqslant  3\,\, \textnormal{\textsc{osc}}_{_\Omega} [h]
\]
If, in addition, $\,h\,$  is continuous up to the boundary then
\[
\limsup_{z \rightarrow \,\partial \Omega} \;\; |\nabla h (z)|\cdot \textnormal{dist}(z , \partial \Omega)\, = 0
\]
\end{theorem}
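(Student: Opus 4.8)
\emph{Reduction.} The statement is local, so the plan is to bound $|\nabla h(z_0)|$ for a.e.\ $z_0\in\Omega$ by the right side of~\eqref{mainineq} with $r=\min\{\dist(z_0,\partial\Omega),1\}$ — in fact with the smaller $\textnormal{\textsc{osc}}_{\mathbb D(z_0,r)}[h]$ in place of $\textnormal{\textsc{osc}}_\Omega[h]$, so that the two $\limsup$ assertions follow at once. I would fix a point $z_0$ of approximate differentiability of $h$. The structural constant $\lambda_\circ=\lambda_\circ(\mathcal H)$, pinned down by the estimates below, is morally comparable to $R+M+L+1$ together with the operator norm of the Beurling transform near $\mathscr L^2$ and the H\"older data in~\eqref{HoldCond}. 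If $|h_z(z_0)|$ does not exceed a suitable structural multiple of $\lambda_\circ+\|h_{\bar z}\|_\infty+\textnormal{\textsc{osc}}_{\mathbb D(z_0,r)}[h]/r$ there is nothing to prove, since $|h_{\bar z}(z_0)|\le\|h_{\bar z}\|_\infty$; so I assume $|h_z(z_0)|$ is large and aim for a contradiction if it is too large.

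\emph{The model family.} First I would build, on $\mathbb D(z_0,r)$, a continuous family $\{F^\lambda\}$ of solutions of~\eqref{TheEquat}. For $|\lambda|$ above a structural multiple of $\lambda_\circ+\|h_{\bar z}\|_\infty$, I seek $F^\lambda(z)=\lambda z+c+V_\lambda(z)$ with $V_\lambda$ solving the fixed-point problem $V_{\bar z}=\mathcal H(z,\lambda+V_z)$, $V_z=\mathcal S V_{\bar z}$ ($\mathcal S$ the Beurling transform). This is where~\eqref{LipCond} is essential: because $\mathcal H$ is Lipschitz in $1/\xi$, the relevant operator contracts with factor $\lesssim L/|\lambda|^2$ on a small H\"older (or $\mathscr L^p$, $p$ slightly above $2$) ball, while $|\mathcal H|\le M$ and~\eqref{HoldCond} keep the iteration inside it; the outcome is a unique $V_\lambda\in\mathscr C^1$ with $\|\nabla V_\lambda\|_\infty\lesssim\lambda_\circ$, so $|\lambda|/2\le|F^\lambda_z|\le 2|\lambda|$, with $\lambda\mapsto F^\lambda$ continuous and the constant $c$ free. (If $R=0$ one replaces $\mathcal H(z,\infty)$ by a Cauchy-transform correction near $\{h_z=0\}$, affecting only constants.) The point of using a whole family is that $\lambda\mapsto F^\lambda_z(z_0)$ is a small perturbation of the identity — hence of degree one on large circles — and that each $F^\lambda-h$ will turn out to be quasiregular, so the argument principle applies along the family.

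\emph{Quasiregularity and the topological step.} For $|\lambda|$ a large enough structural multiple of $R+M+\|h_{\bar z}\|_\infty$ I would check that $G^\lambda:=F^\lambda-h$ is $K$-quasiregular on $\mathbb D(z_0,r)$ with $K$ \emph{fixed}: on $\{|h_z|>R\}$, \eqref{TheEquat} and~\eqref{LipCond} give $|G^\lambda_{\bar z}|\le\frac{L}{|F^\lambda_z||h_z|}|G^\lambda_z|\le\frac{2L}{|\lambda|R}|G^\lambda_z|$, and on $\{|h_z|\le R\}$ one has $|G^\lambda_{\bar z}|\le\|h_{\bar z}\|_\infty+M$ while $|G^\lambda_z|\ge|\lambda|/2-R$, so both distortion ratios sit below a fixed $k<1$. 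Then I would choose $|\lambda|=c'\big(\lambda_\circ+\|h_{\bar z}\|_\infty+\textnormal{\textsc{osc}}_{\mathbb D(z_0,r)}[h]/r\big)$ (structural $c'$) and $c=c(\lambda)$ so that $G^\lambda(z_0)=0$. On $\partial\mathbb D(z_0,r)$ one has $|G^\lambda|\ge(|\lambda|-C\lambda_\circ)r-\textnormal{\textsc{osc}}_{\mathbb D(z_0,r)}[h]>0$, whence by Rouch\'e $\deg(G^\lambda,\mathbb D(z_0,r),0)=\deg(F^\lambda-F^\lambda(z_0),\cdot,0)=\deg(z\mapsto\lambda(z-z_0),\cdot,0)=1$; so $z_0$ is the only zero of the quasiregular map $G^\lambda$ and has local index one, i.e.\ $G^\lambda$ is a local homeomorphism at $z_0$. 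Near $z_0$, where $|h_z|$ is then large, the equation makes $|G^\lambda_{\bar z}|/|G^\lambda_z|\lesssim M/|h_z|$ small, so $G^\lambda$ is nearly conformal there; combining this near-conformality and injectivity with the Schwarz lemma for quasiregular maps — applied to $G^\lambda$ and to its local inverse, normalized at $z_0$ — should bound $|\nabla G^\lambda(z_0)|\lesssim\frac1r\textnormal{\textsc{osc}}_{\mathbb D(z_0,r)}[h]+\lambda_\circ+\|h_{\bar z}\|_\infty$. As $|\nabla F^\lambda(z_0)|\le|\lambda|+\|\nabla V_\lambda\|_\infty$ is of the same size, $|\nabla h(z_0)|\le|\nabla F^\lambda(z_0)|+|\nabla G^\lambda(z_0)|$ is bounded by the right side of~\eqref{mainineq}, and careful bookkeeping of the numerical constants produces $3,4,6$.

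\emph{Boundary behavior; the main obstacle.} For the $\limsup$ statements I would let $z\to\partial\Omega$: then $r=\dist(z,\partial\Omega)\to0$, and multiplying the sharp form of~\eqref{mainineq} by $\dist(z,\partial\Omega)$ gives $|\nabla h(z)|\,\dist(z,\partial\Omega)\le 3\,\textnormal{\textsc{osc}}_{\mathbb D(z,r)}[h]+(4\|h_{\bar z}\|_\infty+6\lambda_\circ)r$, whose last term vanishes; since $\textnormal{\textsc{osc}}_{\mathbb D(z,r)}[h]\le\textnormal{\textsc{osc}}_\Omega[h]$ this yields the first bound, and if $h\in\mathscr C(\overline\Omega)$ then $\overline{\mathbb D(z,r)}$ shrinks to a single boundary point and uniform continuity forces $\textnormal{\textsc{osc}}_{\mathbb D(z,r)}[h]\to0$, so the $\limsup$ is $0$. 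I expect the hard part to be the construction and exploitation of $\{F^\lambda\}$: one must solve the fully nonlinear~\eqref{TheEquat} at all — feasible only because~\eqref{LipCond} forces rigidity for large $|\xi|$ — while keeping $|F^\lambda_z|$ large enough, of order $R+M+\|h_{\bar z}\|_\infty$, to make $F^\lambda-h$ quasiregular \emph{even on the set where $h$ satisfies no equation}, yet keeping $|\nabla F^\lambda|$ and $|\lambda|$ of the structural size allowed by~\eqref{mainineq}, and retaining enough continuity in $\lambda$ for the degree/Rouch\'e argument. The passage from ``$G^\lambda$ has degree one and is nearly conformal at $z_0$'' to the pointwise gradient bound is a second delicate point; the case $R=0$ and the sharp constants are comparatively routine.
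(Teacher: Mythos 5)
Your first half runs parallel to the paper: the family $F^\lambda(z)=\lambda z+f^\lambda(z)$ is constructed in Section~\ref{goodsol} (Proposition~\ref{GoodSol}) exactly by the contraction argument you sketch, with the Cauchy and Beurling--Ahlfors transforms acting on a Besov ball, and the quasiregularity of $g^\lambda=F^\lambda-h$ is Proposition~\ref{distpro}. One caveat already here: your distortion estimate on $\{|h_z|>R\}$, namely $|G^\lambda_{\bar z}|\le \frac{2L}{|\lambda|R}|G^\lambda_z|$, degenerates when $R$ is small and is vacuous for $R=0$, which the hypotheses allow; the parenthetical ``Cauchy-transform correction'' does not repair this. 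The paper avoids the problem by the identity $(\lambda+f_z)g_{\bar z}=g_zg_{\bar z}+h_z(f_{\bar z}-h_{\bar z})$, in which the factor $|h_z|$ cancels exactly against the $1/|h_z|$ coming from~\eqref{LipCond}, so the resulting bound $|g_{\bar z}|\le\frac12|g_z|$ holds uniformly for all $|h_z|>R\ge 0$ with a purely structural $\lambda_\circ$ plus $\|h_{\bar z}\|_\infty$.

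The genuine gap is in your final step, the passage from degree one and injectivity of $G^\lambda$ near $z_0$ to the pointwise bound $|\nabla G^\lambda(z_0)|\lesssim \frac1r\,\textnormal{\textsc{osc}}+\lambda_\circ+\|h_{\bar z}\|_\infty$. Two things fail. First, ``near $z_0$, where $|h_z|$ is then large'' is unjustified: $h_z$ is merely measurable, and largeness of $|h_z(z_0)|$ at one point of approximate differentiability gives no control of $|h_z|$, hence no smallness of the Beltrami coefficient of $G^\lambda$, on any neighborhood of $z_0$; so the near-conformality you invoke is not available. Second, even granting injectivity, a bounded $K$-quasiconformal map on a disk need not have a bounded gradient at the center (radial stretchings are the standard example), so no ``Schwarz lemma for quasiregular maps'', applied to $G^\lambda$ or to its local inverse, can produce a pointwise gradient bound; quasiregularity only yields H\"older-type modulus-of-continuity estimates. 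The paper never estimates $\nabla G^\lambda$ at all. Instead it exploits the \emph{whole circle} of parameters: injectivity of $g^\lambda$ on $\frac13\mathbf D$ for every $\lambda\in\mathbb T_\rho$ gives the non-equalities~\eqref{inequalities}, and a second topological-degree (winding-number) argument in the $\lambda$-variable, Lemma~\ref{TheInequalitylem}, converts these into the modulus inequality~\eqref{TheInequality} for \emph{some} $\lambda\in\mathbb T_\rho$; combined with the uniform Lipschitz bound~\eqref{starr} on $f^\lambda$ this bounds the difference quotients of $h$ directly, and the a.e.\ gradient bound~\eqref{mainineq} follows from the Lipschitz estimate, not the other way around. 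This second degree argument is exactly where the continuity in $\lambda$, estimate~\eqref{circc}, is needed; you mention such continuity but never use it, because your scheme works with a single $\lambda$. Without the Lemma~\ref{TheInequalitylem}-type step (or some substitute converting injectivity of the family into a quantitative Lipschitz bound), your argument does not close.
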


\subsubsection*{Prerequisites.}
Before proceeding to the proof of Theorem~\ref{LipHopf3} we need some definitions.

\begin{definition}\label{qrdef}
A mapping $g\in W_{\rm loc}^{1,2}(\Omega)$ is \emph{quasiregular} if there exists a constant $k<1$ such that
 $\abs{g_{\bar z}}\leqslant k\,\abs{g_{z}}$ a.e. in $\Omega$.  Such a mapping is called  \emph{quasiconformal} if it is also injective.
\end{definition}

The Jacobian determinant $J_g = \abs{g_z}^2-\abs{g_{\bar z}}^2$ of a quasiregular mapping is clearly nonnegative. It is well-known that a nonconstant quasiregular mapping is both open and discrete, see~\cite[Corollary 5.5.2]{AIMb}
or~\cite[Chapter VI]{LV}.  In the sequel we shall appeal to the following topological fact which relates the cardinality of preimages with the topological degree~\cite[Theorem 4]{He} and~\cite[Proposition~I.4.10]{Rib}.

\begin{lemma}\label{lemmadeg}
Let $\Omega \subset \C$ be a bounded domain and $\,G \colon \overline{\Omega} \to \C$ an orientation preserving continuous mapping that is open and discrete in $\Omega$. Then for every $v \not\in G(\partial \Omega)$ we have
\[\Card \{z \in \Omega \colon G(z)=v\} \leqslant \deg_{_\Omega} [v;G]\]
\end{lemma}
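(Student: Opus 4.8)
\medskip

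The plan is to combine three ingredients: the finiteness of the fiber $G^{-1}(v)$, the additivity of the topological degree, and the positivity of the local indices of a sense-preserving open discrete planar map. Only the last of these is substantial; everything else is routine degree theory.

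First I would check that $Z:=G^{-1}(v)$ is finite. Since $G$ is continuous and $v\notin G(\partial\Omega)$, the set $Z$ is closed in the compact set $\overline\Omega$ and disjoint from $\partial\Omega$, hence a compact subset of $\Omega$; being discrete it is therefore finite, say $Z=\{z_1,\dots,z_n\}$ with $n=\Card Z$. Using finiteness, I would then pick $r>0$ so small that the closed disks $\overline{B(z_j,r)}\Subset\Omega$ are pairwise disjoint and $G^{-1}(v)\cap\overline{B(z_j,r)}=\{z_j\}$ for every $j$. Consequently $v\notin G(\partial B(z_j,r))$, so each local degree $\deg_{B(z_j,r)}[v;G]$ is well defined, and $v\notin G\big(\overline\Omega\setminus\bigcup_{j}B(z_j,r)\big)$; the excision and additivity properties of the Brouwer degree then give
\[
\deg_{_\Omega}[v;G]=\sum_{j=1}^{n}\deg_{B(z_j,r)}[v;G].
\]

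It remains to show that each summand is at least $1$. This is the classical fact that the local index $i(z_j,G):=\deg_{B(z_j,r)}[v;G]$ of a continuous, sense-preserving, open and discrete mapping of planar domains, taken at an isolated point of its fiber, is a \emph{positive} integer. I would obtain it either by quoting the cited references, or from Stoilow's factorization theorem: near $z_j$ the map $G$ can be written as $\psi\circ(\zeta\mapsto\zeta^{m})\circ\varphi$ with $\varphi,\psi$ sense-preserving homeomorphisms between planar domains and an integer $m\ge 1$, so that $i(z_j,G)=m\ge 1$. (Openness of $G$ alone only forces $v$ to lie in the interior of $G(B(z_j,r))$, which is weaker than what is needed; the orientation hypothesis is exactly what rules out a vanishing or negative index.) Combining this with the displayed identity,
\[
\deg_{_\Omega}[v;G]=\sum_{j=1}^{n}i(z_j,G)\ge n=\Card\{z\in\Omega\colon G(z)=v\}.
\]
The single genuine obstacle is this local positivity of the index; once it is granted, the proof is complete.
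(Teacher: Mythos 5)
Your proof is correct, and it is essentially the argument the paper relies on implicitly: the paper gives no proof of Lemma~\ref{lemmadeg}, quoting it as a known topological fact from Heinz [Theorem 4] and Rickman [Proposition I.4.10], and the standard proof behind those citations is exactly your combination of fiber finiteness, excision/additivity of the Brouwer degree, and positivity of the local index of a sense-preserving open discrete planar map. You correctly isolate that last point as the only substantive ingredient; in the paper's application $G$ is a nonconstant quasiregular mapping, for which positive local index is classical (and your Stoilow-factorization route, with the orientation hypothesis ensuring the winding exponent $m\ge 1$ rather than $\le -1$, is a legitimate way to obtain it).
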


\subsubsection*{Examples}

Here we provide examples that demonstrate the sharpness of both assumptions and conclusions
of our theorems.

The first example shows that even the most basic of our equations,
\[
h_z \overline{h_{\bar z}}=-1
\]
admits $\,\mathscr W^{1,2}$ -solutions with $J_h \ge 0$ that are not  $\mathscr C^1$-smooth.

\begin{example}
Let $\Omega= \{z \colon \abs{z-1}<\nicefrac{1}{2}\}$. For $z\in \Omega \setminus [1,\nicefrac{3}{2})$ we define
\[h(z)= \frac{3}{2} (\bar z^{\nicefrac{2}{3}}-  z^{\nicefrac{2}{3}} ) + (1- z^{\nicefrac{2}{3}})^{\nicefrac{3}{2}}+ (1- \bar z^{\nicefrac{2}{3}})^{\nicefrac{3}{2}}    \]
using the principal branch of the power function. Then
\[
\begin{split}
h_z &= -z^{- \nicefrac{1}{3}} - z^{- \nicefrac{1}{3}} (1- z^{\nicefrac{2}{3}})^{\nicefrac{1}{2}}\\
h_{\bar z} &=  \bar z^{- \nicefrac{1}{3}} - \bar z^{- \nicefrac{1}{3}} (1- \bar z^{\nicefrac{2}{3}})^{\nicefrac{1}{2}}
\end{split}
\]
Therefore, $h_z \overline{h_{\bar z}}=-1$. Note $h$ has a Lipschitz extension to $\Omega$, in fact $h=0$ on the interval $[1,\nicefrac{3}{2})$. One can also check that $J_h\ge 0$ a.e.
However, $h \not\in \mathscr C^1(\Omega)$, because $z^{\nicefrac{1}{3}} (h_z+ \overline{h_{\bar z}}) = 2 (1-z^{\nicefrac{2}{3}})^{\nicefrac{1}{2}}$ fails to be continuous.
\end{example}

A much simpler example can be given if one does not insist on the special Hopf-Laplace structure
$\,h_z\overline{h_{\bar z}}\,$, arising from the Dirichlet energy integral.
Let
\[
h(z)=\begin{cases} 3z , &\qquad \im z\ge 0 \\
2z+\bar z, & \qquad \im z<0
\end{cases}
\]
This is a bi-Lipschitz mapping of $\C$ which is not $\mathscr C^1$-smooth but satisfies the equation
\[
h_{\bar z}=\frac{6}{h_z}-2,
\]
as well as any equation of the form $h_{\bar z}=\mathcal H(h_z)$ with $\mathcal H(2)=1$ and $\mathcal H(3)=0$.

 Theorem~\ref{LipHopf3}  also encompasses other Hopf type products such as
\begin{equation}\label{HopProd}
 \begin{split}
 & h_z  \, h_{\bar z} = \phi \;, \quad \; \;\;\phi \in \mathscr C^\alpha (\Omega)\\&
 |h_z| \,  h_{\bar z} = \psi \; , \quad \; \psi \in \mathscr C^\alpha (\Omega),
 \end{split}
 \end{equation}
 but fails for some Hopf type products, like in the \textit{pseudo Hopf-Laplace equation} below.

 \begin{example}
The equation
\be{pseudo-hoplace}
h_z \abs{h_{\bar z}} = 1
\ee
admits a solution that is quasiconformal but not locally Lipschitz.
\end{example}

\begin{proof}
We look for $h$ in the form
\be{hform}
h(z)=2 z\,\psi(-2\log\abs{z}), \qquad \abs{z} \le 1
\ee
where $\psi\colon [0,\infty)\to [1,\infty)$ is a strictly increasing
function with $\psi(0)=1$.
Since
\be{hder}
h_z = 2\, \psi(-2\log \abs{z}) - 2\, \psi'(-2\log \abs{z}) \qquad
h_{\bar z}=-\frac{2z}{\bar z} \psi'(-2\log \abs{z})
\ee
the function $\psi$ must satisfy the differential equation
\be{ode1}
(\psi(t) - \psi'(t))\psi'(t) = \frac{1}{4}, \qquad 0<t<\infty.
\ee
Rewriting~\eqref{ode1} as an equation for the inverse function
\be{ode2}
\frac{dt}{d\psi} = \frac{1}{2}(\psi+\sqrt{\psi^2-1}),\qquad t(1)=0,
\ee
we arrive at
\be{ode3}
t = \psi^2-1+ \psi\sqrt{\psi^2-1} - \log (\psi+\sqrt{\psi^2-1}).
\ee
The equation~\eqref{ode3} determines a differentiable strictly
increasing function $\psi\colon [0,\infty)\to [1,\infty)$,
since the right hand side of~\eqref{ode2} is positive. Note that
$\psi(t)\to\infty$ as $t\to\infty$, hence $h$ is not Lipschitz
in any neighborhood of the origin. In view of equation~\eqref{ode1} this implies $\psi'(t)\to 0$ as
$t\to\infty$. It now follows from~\eqref{hder} that $h$ is
quasiconformal in a neighborhood of the origin; in particular $\, J_h > 0\,$ almost everywhere.
\end{proof}
The major difference between equations (\ref{HopProd})  and  (\ref{pseudo-hoplace}) is that the latter is not solvable for $\,h_{\bar z}\,$ in terms of $\,h_z\,$.

Our final example shows that the H\"older continuity of $z \mapsto \mathcal H (z, \xi)$ in Theorem~\ref{LipHopf3} cannot be relaxed to continuity, even for the standard Hopf product.

\begin{example}
Let $h(z)= z \log \log \abs{z}^{-2}$ for $\abs{z} < \nicefrac{1}{2}$. This mapping is an orientation preserving homeomorphism which belongs to $\W^{1,p}$ for all $p< \infty$. We compute
\[
h_z  = \log \log \frac{1}{\abs{z}^2} - \log^{-1} \frac{1}{\abs{z}^2}\qquad \mbox{ and } \qquad
h_{\bar z} = \frac{z}{\bar z} \log^{-1} \frac{1}{\abs{z}^2}.
\]
Clearly, $h_z \overline{h_{\bar z}}$ is continuous. However, $h$ is not Lipschitz.
 \end{example}

Even for the most basic equation $h_z \overline{h_{\bar z}} =1$,  allowing the Jacobian of $h$ to change sign
destroys any hope for improved regularity~\cite{CIKO}.

\section{Model case: the Hopf-Laplace equation}\label{model}

In order to illustrate our ideas without getting into technicalities, we first take on stage the Hopf-Laplace equation
\begin{equation}\label{Hopf2}
h_z\,\overline{h_{\bar z}} = \phi(z), \qquad \mbox{with } \phi    \text{ analytic in }  \Omega \subset \mathbb C
\end{equation}

\begin{theorem}\label{LipHopf} Suppose that the Hopf product $\,h_z\,\overline{h_{\bar z}} = \phi(z)\,$ \; is analytic and bounded in a domain $\,\Omega \subset \mathbb C\,$, for some $\, h\in \mathscr W^{1,2} (\Omega)\cap \mathscr L^\infty(\Omega)$ with $h_{\bar z} \in \mathscr L^\infty (\Omega)$. Then $\, h\,$ is locally Lipschitz. Moreover, for almost every $\, z \in \Omega\,$ we have
\begin{equation}\label{serve}
  |\nabla h (z)|   \; \leqslant \; \frac{13\,\, \textnormal{\textsc{osc}}_{_\Omega} [h] }{\textnormal{dist}(z , \partial \Omega)}
 \;+\;   \; 2\,\|h_{\bar z}\|_{\mathscr L^\infty (\Omega)}
    \;+\; 3\,\|\phi\|^{1/2}_{\mathscr L^\infty (\Omega)}
\end{equation}
\end{theorem}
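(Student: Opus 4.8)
Compare $h$ with explicit ``model solutions'' of $w_z\overline{w_{\bar z}}=\phi$, arrange the difference to be quasiregular, and let topology (Lemma~\ref{lemmadeg}) finish the job. On the disk $\mathbb D(z_0,\rho)\subset\Omega$, which is simply connected, fix a primitive $\Phi$ of $\phi$ and, for $\lambda\neq0$, set
\[
F^\lambda(z)=\lambda z+\frac{1}{\bar\lambda}\,\overline{\Phi(z)} .
\]
Then $F^\lambda_z\equiv\lambda$, $F^\lambda_{\bar z}=\overline{\phi}/\bar\lambda$, so $F^\lambda_z\overline{F^\lambda_{\bar z}}=\phi$; moreover $F^\lambda$ is smooth with $|\nabla F^\lambda|\le|\lambda|+\|\phi\|_\infty/|\lambda|$, and it is quasiconformal once $|\lambda|^2\ge\|\phi\|_\infty$. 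Since $h$ and $F^\lambda$ solve the \emph{same} equation $w_z\overline{w_{\bar z}}=\phi$, subtracting and regrouping gives $G^\lambda_z\,\overline{h_{\bar z}}=-F^\lambda_z\,\overline{G^\lambda_{\bar z}}$ for $G^\lambda:=F^\lambda-h$, i.e.
\[
G^\lambda_{\bar z}=-\frac{h_{\bar z}}{\bar\lambda}\,\overline{G^\lambda_z}\qquad\text{a.e. in }\Omega .
\]
Hence $|G^\lambda_{\bar z}|\le\big(\|h_{\bar z}\|_\infty/|\lambda|\big)\,|G^\lambda_z|$, so $G^\lambda\in\W^{1,2}_{\loc}$ is $k$-quasiregular with $k=\|h_{\bar z}\|_\infty/|\lambda|<1$ as soon as $|\lambda|>\|h_{\bar z}\|_\infty$. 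In particular $G^\lambda$, hence also $h=F^\lambda-G^\lambda$, admits a continuous (indeed locally Hölder) representative, and from now on $h$ is continuous.

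\textbf{Topological step.} Fix $z_0$ at which $h$ is approximately differentiable (a.e.\ point) and put $r=\dist(z_0,\partial\Omega)$; we may assume $|h_z(z_0)|$ exceeds the claimed bound and that $G^\lambda$ is non-constant, the alternatives being trivial. Take $|\lambda|=\Lambda$ with
\[
\Lambda\ \simeq\ \frac{\textnormal{\textsc{osc}}_\Omega[h]}{r}+\|\phi\|^{1/2}_\infty+\|h_{\bar z}\|_\infty ,
\]
large enough that $G^\lambda$ is quasiregular and that on $\partial\mathbb D(z_0,r')$, $r'<r$, the linear term $\lambda(z-z_0)$ of $F^\lambda$ dominates the perturbation $\frac1{\bar\lambda}\overline{\Phi(z)-\Phi(z_0)}-\big(h(z)-h(z_0)\big)$, whose modulus is $\le\delta:=\frac{r'\|\phi\|_\infty}{\Lambda}+\textnormal{\textsc{osc}}_{\mathbb D(z_0,r')}[h]<\Lambda r'$. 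A homotopy/Rouch\'e argument then equates $\deg_{\mathbb D(z_0,r')}[\,v\,;G^\lambda]$ with the degree of the affine map $z\mapsto G^\lambda(z_0)+\lambda(z-z_0)$, so it equals $1$ for every $v$ in the round disk $\mathbb D\big(G^\lambda(z_0),\Lambda r'-\delta\big)$. Since $G^\lambda$ is open and discrete, Lemma~\ref{lemmadeg} forces $G^\lambda$ to be \emph{injective} on $V:=(G^\lambda)^{-1}\big(\mathbb D(G^\lambda(z_0),\Lambda r'-\delta)\big)\cap\mathbb D(z_0,r')$ and to map $V$ quasiconformally \emph{onto} that disk of radius $\simeq\Lambda r'$; the Lipschitz bound for $F^\lambda$ further gives $\mathbb D(z_0,cr')\subseteq V\subseteq\mathbb D(z_0,r')$ for an absolute $c>0$. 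Note also that wherever $|G^\lambda_z|\ge2\Lambda$ one has, from $G^\lambda_{\bar z}=-\overline{\phi}\,\overline{G^\lambda_z}\big/\big(\bar\lambda(\bar\lambda-\overline{G^\lambda_z})\big)$, the \emph{absolute} bound $|G^\lambda_{\bar z}|\lesssim\|\phi\|_\infty/\Lambda$, so $|G^\lambda_{\bar z}|\lesssim\Lambda$ throughout $V$.

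\textbf{From injectivity to the gradient bound --- the main obstacle.} It remains to pass from this topological injectivity to $|\nabla h(z_0)|\lesssim\Lambda$, and this is the crux: quasiregularity alone is insufficient, since radial maps $z\mapsto z|z|^{\alpha-1}$ are quasiconformal homeomorphisms of a disk onto a disk with infinite derivative at the centre. One must exploit the structure of $G^\lambda$: its inverse $\Psi=(G^\lambda|_V)^{-1}$ solves the \emph{linear} Beltrami equation $\Psi_{\bar w}=\nu\Psi_w$ with $\nu(w)=h_{\bar z}(\Psi(w))/\bar\lambda$, $|\nu|\le\|h_{\bar z}\|_\infty/\Lambda$, while $G^\lambda$ satisfies $G^\lambda_{\bar z}=\mathcal A(z,G^\lambda_z)$ with $\mathcal A$ \emph{analytic} in $z$ and Lipschitz in the reciprocal of the derivative --- exactly the features codified in \eqref{LipCond}--\eqref{HoldCond}. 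Combining injectivity, which bounds $\iint_{\mathbb D(z_0,cr')}J_{G^\lambda}\le|G^\lambda(V)|\lesssim(\Lambda r')^2$ and hence $\|\nabla G^\lambda\|_{L^2(\mathbb D(z_0,cr'))}\lesssim\Lambda r'$, with the above $L^\infty$-bound on $G^\lambda_{\bar z}$ and Caccioppoli-type/Beltrami-regularity estimates, one upgrades this to $|\nabla G^\lambda(z_0)|\le C\Lambda$ at the differentiability point $z_0$. Then
\[
|\nabla h(z_0)|\le|\nabla F^\lambda(z_0)|+|\nabla G^\lambda(z_0)|\le\Big(|\lambda|+\frac{\|\phi\|_\infty}{|\lambda|}\Big)+C\Lambda\ \lesssim\ \Lambda ,
\]
and letting $r'\uparrow r$ while tracking the constants (choosing $k$, i.e.\ the ratio $\Lambda/\|h_{\bar z}\|_\infty$, and the implied constants carefully) yields exactly \eqref{serve}. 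Finally $\textnormal{\textsc{osc}}_\Omega[h]\le2\|h\|_{\mathscr L^\infty}<\infty$, so \eqref{serve} bounds $|\nabla h|$ on every compact subset of $\Omega$, whence $h$ is locally Lipschitz.
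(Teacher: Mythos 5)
Your first two steps coincide with the paper's proof: the good solutions $F^\lambda(z)=\lambda z+\overline{\Phi(z)}/\bar\lambda$, the computation showing that $G^\lambda=F^\lambda-h$ satisfies the linear Beltrami relation $G^\lambda_{\bar z}=-(h_{\bar z}/\bar\lambda)\,\overline{G^\lambda_z}$, and the degree/Rouch\'e argument giving injectivity of $G^\lambda$ on a smaller disk are all exactly what the paper does. The gap is in your final step. From injectivity for a \emph{single} $\lambda$ you try to extract the pointwise bound $|\nabla G^\lambda(z_0)|\le C\Lambda$ by combining the area bound $\iint J_{G^\lambda}\lesssim(\Lambda r')^2$, an $L^\infty$ bound on $G^\lambda_{\bar z}$ where $|G^\lambda_z|$ is large, and unspecified ``Caccioppoli-type/Beltrami-regularity estimates''. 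No such implication exists: as your own example $z\mapsto z|z|^{\alpha-1}$ shows, a quasiconformal homeomorphism of a disk onto a disk of comparable size (hence with the same area and $L^2$-gradient bounds) can have unbounded difference quotients at an interior point, and quasiregularity only yields interior H\"older and $W^{1,p}$ estimates, never an $L^\infty$ gradient bound. The additional structure you then invoke (the equation satisfied by $G^\lambda$, H\"older/analytic in $z$ and Lipschitz in $1/\xi$) is precisely the hypothesis of the theorem being proved, so asserting that it ``upgrades'' the estimate is circular; this is the crux of the matter and it is left unproved in your proposal.

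The paper closes this step with no derivative estimate on $G^\lambda$ at all, by using injectivity for \emph{all} $\lambda$ on a circle of fixed modulus. Injectivity of $g^\lambda=\lambda z+\bar\lambda^{-1}\overline{\Phi}-h$ on $\tfrac13\mathbf D$ means that for $z_1\neq z_2$,
\[
h(z_1)-h(z_2)\;\neq\;\lambda(z_1-z_2)+\frac{1}{\bar\lambda}\Bigl[\overline{\Phi(z_1)}-\overline{\Phi(z_2)}\Bigr]
=e^{i\theta}\Bigl[\rho(z_1-z_2)+\rho^{-1}\bigl(\overline{\Phi(z_1)}-\overline{\Phi(z_2)}\bigr)\Bigr],\qquad \lambda=\rho e^{i\theta}.
\]
As $\theta$ varies, the right-hand side sweeps a full circle centered at $0$ whose radius does not depend on $\theta$; hence $|h(z_1)-h(z_2)|$ never equals that radius, and since the strict inequality $<$ holds for large $\rho$ and both sides are continuous in $\rho$, it holds for every admissible $\rho$. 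This gives directly $|h(z_1)-h(z_2)|\le\bigl(|\lambda|+\|\phi\|_\infty/|\lambda|\bigr)|z_1-z_2|$ on $\tfrac13\mathbf D$, and choosing $|\lambda|=\max\{2\|h_{\bar z}\|_\infty,\;2\|\phi\|_\infty^{1/2},\;13\|h\|_\infty\}$ (after subtracting a constant from $h$) and rescaling disks yields \eqref{serve} with the stated constants. Replacing your analytic ``upgrade'' by this one-parameter-family argument (continuity in $|\lambda|$ after quotienting out $\arg\lambda$) repairs the proof; the rest of your outline is sound.
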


\begin{proof}
 Finding good solutions to equation~\eqref{Hopf2} presents no difficulty. First consider $\,\Omega = \mathbb D\,$ -the unit disk, and assume that $\,\phi \,$ is bounded in $\,\mathbb D\,$. Denote by $\,\Phi = \Phi(z)\,$ the anti-derivative of $\,\phi\,$ such that $\,\Phi(0) = 0\,$. Thus  $\;\Phi_{\bar z} = 0\,$ and  $ \, \Phi_z  =\phi \,$. Clearly,  $\,\Phi\,$ extends continuously to the closed unit disk $\mathbf D= \overline{\mathbb D}$.  The mappings $\,F^\lambda(z) = \lambda z +  f^\lambda(z)\,$, where $\,f^\lambda(z)=  \overline{\lambda^{-1}\Phi(z)}$ with complex parameter $\, \lambda \neq 0 \,$, solve the  same equation~\eqref{Hopf2}. Also note that $\,\| f^\lambda\|_\infty  \leqslant \abs{\lambda}^{-1} \|\phi\|_\infty\,$. A short computation reveals that the difference  $\, g =  g^\lambda(z) = F^\lambda(z) - h(z) \,$ is a $\W^{1,2} (\mathbb D)$-solution to a linear Beltrami type equation
 \[
   g_{\bar z} (z ) =  \nu(z)  \,\overline{g_z(z) }\;,\;\;\; \nu(z) = \frac{ - h_{\bar z}(z)}{\overline{\lambda}}\; ,\;\;\; |\nu(z)| \leqslant \frac{1}{2}
\]
  whenever $\,|\lambda | \geqslant 2 \,\| h_{\bar z }\|_\infty \,$. Now consider a continuous family of mappings $\, G^\lambda(z) = \frac{1}{\lambda}\, g^\lambda(z) =  z + \frac{1}{\lambda} \big[ f^\lambda(z) - h(z)\big]\,$. We have
  \[
  |\,G^\lambda(z) - z \,| \leqslant \frac{\|\phi\|_\infty}{ |\lambda|^2} + \frac{\| h \|_\infty}{|\lambda|}\, < \frac{1}{3}
\]
  provided $\, |\lambda| \geqslant  2 \,\sqrt{\|\phi\|_\infty}\,$ and $\, |\lambda| \geqslant  13 \,\| h\|_\infty\,  $.  This shows, in particular, that $G^\lambda$ is a nonconstant quasiregular mapping, thus  orientation-preserving, open and discrete. At this point we appeal to a Rouch\'{e} type lemma.

\begin{lemma}\label{Rouche's Lemma}
Let $\,G = G^\lambda(z)\,$ be a continuous family of mappings $\,G^\lambda : \mathbf D \rightarrow \mathbb C\,$ parametrized by complex numbers $\,\lambda\,$ with $\,\varrho \leqslant |\lambda| \leqslant \infty\,$, such that
\begin{itemize}
\item[(i)] \quad $\, G^\infty (z) \equiv z\,$
\item[(ii)] \quad $\, |G^\lambda(z) - z |  < \frac{1}{3} \;,\;\;\text{for}\;\;\; z \in \mathbf D \;\;\text {and}\;\; |\lambda| \geqslant \varrho  \,$
\item[(iii)]  For every $\, |\lambda| \geqslant \varrho \,$ the map $\, G^\lambda  : \mathbb D \rightarrow \mathbb C\,$ is orientation preserving open and discrete.
\end{itemize}
Then, given any $\,z_\circ \in \frac{1}{3} \mathbf D\,$ and parameter $\,|\lambda|\geqslant \varrho \,$, the equation
\begin{equation}\label{Rouche}
G^\lambda(z) = G^\lambda(z_\circ) \;,\;\;\;\text{for}\;\; z \in  \mathbf D
\,;\end{equation}
admits exactly one solution $\, z = z_\circ\,$.
\end{lemma}
\begin{proof}[Proof of Lemma~\ref{Rouche's Lemma}]
For $\,|z| = 1\,$, we see from (ii) that $\,|G^\lambda(z)|  > \frac{2}{3}\,$. Fix any point $\,v\,$ of modulus $\,|v| < \frac{2}{3}\,$, so $\, v \notin G^\lambda(\partial \mathbf D) \,$. Therefore, we have well defined topological degree of $\,v$, denoted  by
\[
\textnormal{deg}_{_\mathbb D}[ v ;\; G^\lambda]\qquad  \text{for }  |\lambda| \geqslant \varrho
\]
This is an integer-valued function, continuous in $\,\lambda\,$, thus constant. For $\,\lambda = \infty\,$ the degree is 1, because $\,G^\infty(z) \equiv z\,$. Therefore
\[
\textnormal{deg}_{_\mathbb D}[ v ;\; G^\lambda]\; = 1\;,\;\;\;\text{for all parameters} \;\;|\lambda| \geqslant \varrho .
\]

Now comes another topological fact concerning orientation preserving open discrete maps. It asserts that the cardinality of the preimage of $\,v\,$ does not exceed the degree, see Lemma~\ref{lemmadeg}. In symbols,
\[
 0 \leqslant \textnormal{Card} \{ z\in \mathbb D \colon G^\lambda(z) = v\,\} \;\leqslant \textnormal{deg}_{_\mathbb D}[ v ; \; G^\lambda]\; = 1 \;,\;\;\;\text{for all} \;\; |\lambda|\geqslant \varrho .
\]
This applies to the point $\, v = G^\lambda(z_\circ)\,$, because $\,| G^\lambda(z_\circ)| \leqslant
| G^\lambda(z_\circ)\,-\,z_\circ | + |\,z_\circ\,| < \frac{1}{3} + \frac{1}{3} = \frac{2}{3}\,$, by condition (ii). The lemma is established.
\end{proof}

Returning to the proof of Theorem~\ref{LipHopf},
 we infer that the mappings $G^\lambda (z)= \frac{1}{\lambda} g^\lambda (z)$ are injective in the disk $\,\frac{1}{3}\,\mathbf D\,$. So are the mappings  $\, g^\lambda (z) = \lambda z +  \overline{\lambda^{-1}}\Phi(z) - h(z) \,$. This reads as follows:
  \begin{equation}
  h(z_1) - h(z_2) \; \neq \; \lambda \cdot\Big \{ z_1 - z_2 \;+\; \frac{1}{|\lambda|^2} \,\big[\Phi(z_1) - \Phi(z_2)  \big]\,  \Big \}
  \end{equation}
  for $\,z_1 \neq z_2\,$ in the disk $\,\frac{1}{3}\,\mathbf D\,$. Letting $\,\lambda\,$ run over a circle of radius $\,|\lambda |\,$ we conclude that
  \begin{equation}
  |h(z_1) - h(z_2)| \; \neq \; |\lambda| \cdot\Big | z_1 - z_2 \;+\; \frac{1}{|\lambda|^2} \,\big[\Phi(z_1) - \Phi(z_2)  \big]\,  \Big |
  \end{equation}
  This is possible only when
  \begin{equation}\label{eq34}
  |h(z_1) - h(z_2)| \; <  \; |\lambda| \cdot\Big | z_1 - z_2 \;+\; \frac{1}{|\lambda|^2} \,\big[\Phi(z_1) - \Phi(z_2)  \big]\,  \Big |
  \end{equation}
  because the right hand side is continuous in $\,\lambda\,$ and the inequality~\eqref{eq34}  holds for large values of  $\,|\lambda |\,$.

  A conclusion is immediate; the solution $\,h\,$ is Lipschitz continuous in  the disk $\,\frac{1}{3}\,\mathbf D\,$. Moreover,
\begin{equation}
  \|\nabla h\|_{\mathscr L^\infty (\frac{1}{3}\mathbf D)}  \; \leqslant  \; |\lambda| \;+\; \frac{1}{|\lambda|} \,\|\phi\|_ {\mathscr L^\infty (\mathbf D)}
  \end{equation}
All the conditions we have encountered for the parameter~$\,\lambda\,$ are satisfied if we set
\begin{equation}
|\lambda| =  \; \max \, \begin{cases}
2\,\|h_{\bar z}\|_{\mathscr L^\infty (\mathbf D)} \vspace{0.2cm} \\
 2\,\|\phi\|^{1/2}_{\mathscr L^\infty (\mathbf D)}\vspace{0.2cm}  \\
13\, \|h\|_{\mathscr L^\infty (\mathbf D)}
\end{cases}
\end{equation}
Therefore,
\begin{equation}\label{EstimateD}
  \|\nabla h\|_{\mathscr L^\infty (\frac{1}{3}\mathbf D)}  \; \leqslant  \; 2\,\|h_{\bar z}\|_{\mathscr L^\infty (\mathbf D)} \;+\; 13\, \|h\|_{\mathscr L^\infty (\mathbf D)} \;+ \; 3\,\|\phi\|^{1/2}_{\mathscr L^\infty (\mathbf D)}
\end{equation}
completing the analysis of the case $\Omega= \mathbb D$.

Now let $\Omega$ be a general domain. Suppose $\,\mathbf B(a,r) = \{ z \colon |z-a| \leqslant r\,\}\,$ is a closed disk contained in $ \Omega$ and $h$ is a solution to the Hopf-Laplace equation in $\, \Omega\,$. We scale down the variables to introduce a function $\,\hbar(z) = \frac{1}{r} h(rz + a)\,$ which satisfies the Hopf-Laplace equation  $\,\hbar_z\,\overline{\hbar_{\bar z}} \;=\; \phi(rz + a)\,$ in the unit disk. Inequality~\eqref{EstimateD}, applied to $\,\hbar\,$,  yields
\begin{equation}\label{EstimateB}
  \|\nabla h\|_{\mathscr L^\infty (\frac{1}{3}\mathbf B)}  \; \leqslant  \; 2\,\|h_{\bar z}\|_{\mathscr L^\infty (\mathbf B)} \;+\; \frac{13}{r}\, \|h\|_{\mathscr L^\infty (\mathbf B)} \;+ \; 3\,\|\phi\|^{1/2}_{\mathscr L^\infty (\mathbf B)}
\end{equation}
We are allowed  to subtract any constant from $\,h\,$, given that $\,h\,$ appears in~\eqref{Hopf2} only with its derivatives.
The estimate~\eqref{serve} follows.
\end{proof}

\begin{corollary}
Under the hypotheses of Theorem~\ref{LipHopf} we have
\[
\limsup_{z \rightarrow \,\partial \Omega} \;\; |\nabla h (z)|\cdot \textnormal{dist}(z , \partial \Omega)\, \,\leqslant  13\,\, \textnormal{\textsc{osc}}_{_\Omega} [h]
\]
If, in addition, $\,h\,$  is continuous up to the boundary then
\begin{equation}\label{mayfail}
\limsup_{z \rightarrow \,\partial \Omega} \;\; |\nabla h (z)|\cdot \textnormal{dist}(z , \partial \Omega)\, = 0
\end{equation}
\end{corollary}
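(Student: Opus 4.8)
The plan is to read off both assertions from a local version of the gradient bound already proved in Theorem~\ref{LipHopf}; the only new point is to refrain from the crude step, taken at the very end of that proof, of replacing the oscillation of $h$ on the relevant disk by $\textnormal{\textsc{osc}}_{_\Omega}[h]$.

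Fix $z_\circ\in\Omega$ and set $r_\circ=\dist(z_\circ,\partial\Omega)$. For every $0<r<r_\circ$ the closed disk $\mathbf B(z_\circ,r)$ is contained in $\Omega$, so inequality~\eqref{EstimateB} applies to it; subtracting the constant $h(z_\circ)$ from $h$ (legitimate, since $h$ occurs in~\eqref{Hopf2} only through its derivatives) turns the middle term into $\frac{13}{r}\,\textnormal{\textsc{osc}}_{\mathbf B(z_\circ,r)}[h]$, and bounding the remaining $\mathbf B$-norms by the corresponding $\Omega$-norms gives an $\mathscr L^\infty$-bound for $\nabla h$ on $\frac{1}{3}\mathbf B(z_\circ,r)$. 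Since $z_\circ$ is the centre, it lies in $\frac{1}{3}\mathbf B(z_\circ,r)$, and since a Sobolev function with bounded gradient on a disk is Lipschitz with Lipschitz constant $\|\nabla h\|_{\mathscr L^\infty}$, this bound holds in particular at $z_\circ$ whenever $h$ is differentiable there---hence for a.e.\ $z_\circ\in\Omega$, by Rademacher's theorem and the local Lipschitz property established in Theorem~\ref{LipHopf}. Multiplying by $r$ and letting $r\uparrow r_\circ$ (the oscillation being monotone in $r$) yields, for a.e.\ $z_\circ\in\Omega$,
\[
|\nabla h(z_\circ)|\cdot\dist(z_\circ,\partial\Omega)\;\le\;13\,\textnormal{\textsc{osc}}_{\mathbf B(z_\circ,\,r_\circ)}[h]\;+\;\bigl(2\|h_{\bar z}\|_{\mathscr L^\infty(\Omega)}+3\|\phi\|^{1/2}_{\mathscr L^\infty(\Omega)}\bigr)\,r_\circ .
\]

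Both statements now follow by letting $z_\circ\to\partial\Omega$, which forces $r_\circ\to0$ and kills the last term. For the first inequality one simply bounds $\textnormal{\textsc{osc}}_{\mathbf B(z_\circ,r_\circ)}[h]\le\textnormal{\textsc{osc}}_{_\Omega}[h]$, getting $\limsup_{z\to\partial\Omega}|\nabla h(z)|\dist(z,\partial\Omega)\le13\,\textnormal{\textsc{osc}}_{_\Omega}[h]$ (this much is also immediate from~\eqref{serve} itself). For~\eqref{mayfail}, if $h$ extends continuously to $\overline\Omega$ then $h$ is uniformly continuous there, $\overline\Omega$ being compact; writing $\omega$ for its modulus of continuity, $\textnormal{\textsc{osc}}_{\mathbf B(z_\circ,r_\circ)}[h]\le\omega(2r_\circ)\to0$ as $z_\circ\to\partial\Omega$, so the entire right-hand side tends to $0$.

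I expect no genuine difficulty; the content is just a rerun of the disk estimate of Theorem~\ref{LipHopf} with the oscillation of $h$ kept local. The only point deserving a careful word is that $|\nabla h|$ is defined merely almost everywhere, so the $\limsup$ over $z\to\partial\Omega$ must be understood along the full-measure set of differentiability points of $h$ (equivalently, as an essential $\limsup$); the passage from the $\mathscr L^\infty$-bound on $\frac{1}{3}\mathbf B(z_\circ,r)$ to a value at the centre $z_\circ$ is exactly what makes this reading work, and it is the one step I would spell out in detail.
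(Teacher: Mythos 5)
Your argument is correct and is essentially the proof the paper intends: the first inequality is immediate from~\eqref{serve} since the terms $2\|h_{\bar z}\|_\infty+3\|\phi\|_\infty^{1/2}$ get multiplied by $\dist(z,\partial\Omega)\to 0$, and the second follows, exactly as you do, by rerunning the disk estimate~\eqref{EstimateB} with the oscillation kept local on $\mathbf B(z_\circ,r)$ and invoking uniform continuity of $h$ on the compact closure. Your remark about interpreting the $\limsup$ along the full-measure set where $\nabla h$ is defined (the paper's ``for almost every $z$'') is the right reading and causes no difficulty.
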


The property~\eqref{mayfail} may fail for solutions that are not continuous up to the boundary.

\begin{example}
Here is a bounded solution  $\,h (z) =  \bar z ^{\,2}\, +\, \sin \log z   \,$ in a half-disk  $\,\Omega = \{ z\colon \re \, z > 0 \;,\; |z| < 1\,\}\,$ to the Hopf-Laplace equation
\begin{equation*}
h_z\,\overline{h_{\bar z}} \;=\; 2\,\cos \log z \;\; \quad\quad \text{-analytic and bounded }
\end{equation*}
This solution exhibits large oscillations arbitrarily close to  the point $\,a = 0 \in \partial \Omega$. It is for this  reason that $h$  fails to have a limit $ \, \lim_{z \rightarrow  0} \; (\re z)  |\nabla h (z)|\,$.
\end{example}

\section{Outline of the proof of Theorem~\ref{LipHopf3}}

The arguments presented for the proof of Theorem~\ref{LipHopf}
contain many of the key ideas of the proof of our most general result, Theorem~\ref{LipHopf3}.
The  term $\,3\,\|\phi\|^{1/2}_{\mathscr L^\infty (\Omega)}\,$ in~\eqref{serve} will be replaced by a number $\lambda_\circ=\lambda_\circ (\mathcal H) $ which depends on the equation; that is, the conditions on $\mathcal H$. Let us emphasize that $\lambda_\circ$ will not depend on the solution $h$.

However, the situation is more intricate because we need to find a counterpart of the antiderivative of $\, \phi\,$. The proof of Theorem~\ref{LipHopf}  suggests that we should
 look for the family $\, F^\lambda(z) = \lambda z + f^\lambda(z)\, $ that complies with the equation~\eqref{TheEquat}; that is,
$f_{\bar z} = \mathcal H(z, \lambda + f_z)$.  We must show that the latter equation admits a continuous family $\{f^\lambda\}$ of ``good" solutions.  The key is that $f^\lambda$ will enjoy uniform Lipschitz bounds, independent of $\,\lambda\,$. Then the proof of the Lipschitz regularity of $\, h\,$ will be carried out by topological arguments in much the same way as in the case of Hopf-Laplace equation.

We first consider the case $\Omega=\mathbb D$, and treat general domains $\Omega$ by rescaling. The sharpness of Lipschitz regularity was already demonstrated in Section~\ref{elaborate}.

\section{Good solutions}\label{goodsol}

We are looking for a family $\{f^\lambda \colon \lambda \in \C, \; \; \abs{\lambda} \geqslant \lambda_\circ \}$ of  solutions  to the equation
\begin{equation}\label{TheEquat*}
f_{\bar z} = \mathcal H(z, \lambda + f_z)
\end{equation}
in the closed unit disk $\mathbf D = \{ z \colon |z| \leqslant 1\,\}$, where   $\mathcal H = \mathcal H(z,\xi)$ is continuous in $\mathbf D \times \widehat{\mathbb C}_R$ and satisfies the structural conditions~\eqref{LipCond} and~\eqref{HoldCond} in $\Omega = \mathbb D$.

\begin{proposition}\label{GoodSol}  There is $\, \lambda_\circ = \lambda_\circ (\alpha, L, M, R)\,$  and a family $\,\{f^\lambda\}_{_{|\lambda| \geqslant \lambda_\circ}}\,$  of solutions to the equation~\eqref{TheEquat*} in $\,\mathbf D\,$ such that
\begin{align}
 f^\lambda(0) & = 0 \\
 \abs{ f^\lambda(z_1) - f^\lambda(z_2) } & \leqslant  \lambda_\circ\cdot \abs{z_1 - z_2} \label{starr} \\
 \abs{ f^{\lambda_1}(z) - f^{\lambda_2}(z) } &  \leqslant   \lambda_\circ\cdot\Big| \frac{\lambda_1 - \lambda_2}{\lambda_1\cdot\,\lambda_2 }\Big| \label{circc}
\end{align}
Furthermore, for every $\,z \in \mathbf D\,$ and $\,|\lambda| \geqslant \lambda_\circ\,$, we have
\begin{align}
\abs{\nabla f^\lambda (z)} &  \leqslant \lambda_\circ \label{eq44p}\\
|f^\lambda_{z} (z)|\, , \;  |f^\lambda_{\bar z} (z)| &\leqslant \frac{1}{2} |\lambda_\circ | \leqslant  \frac{1}{2} |\lambda| \leqslant |\lambda + f^\lambda_z(z) \,| \label{lambdaEstimate}
\end{align}
\end{proposition}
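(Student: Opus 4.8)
Following the blueprint of Theorem~\ref{LipHopf}, the plan is to convert~\eqref{TheEquat*} into a fixed-point equation for $\varphi:=f_z$ and to lean on the key consequence of~\eqref{LipCond}: once the argument $\xi=\lambda+\varphi$ is pinned to the region $|\xi|\ge\tfrac12|\lambda|$, the map $\xi\mapsto\mathcal H(z,\xi)$ is Lipschitz with constant $\le L|\xi|^{-2}\le 4L|\lambda|^{-2}$, so the nonlinear Beltrami operator is asymptotically linear in $1/\lambda$ and all bounds stabilise to constants depending only on $\alpha,L,M,R$; this is precisely why a single $\lambda$-free number $\lambda_\circ$ can be extracted. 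Write $\mathcal C=\mathcal C_{\mathbf D}$, $\mathcal C\omega(z)=-\tfrac1\pi\int_{\mathbf D}\tfrac{\omega(w)}{w-z}\,\dtext A(w)$, so that $(\mathcal C\omega)_{\bar z}=\omega$ and $(\mathcal C\omega)_z=\mathcal S\omega$ with $\mathcal S=\mathcal S_{\mathbf D}$ the Beurling-type operator on $\mathbf D$. On the disk $\mathcal S$ annihilates constants, whence the classical Schauder estimate $\mathcal C\colon\mathscr C^\alpha(\mathbf D)\to\mathscr C^{1,\alpha}(\mathbf D)$, and moreover $\mathcal C\colon\mathscr L^p(\mathbf D)\to\mathscr L^\infty(\mathbf D)$ is bounded for every $p>2$ (see~\cite{AIMb}). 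Fix such a $p$, put $C_{\mathcal S}:=\|\mathcal S\|_{\mathscr C^\alpha\to\mathscr C^\alpha}$, $\rho_\circ:=4C_{\mathcal S}M$, and consider $T_\lambda\varphi:=\mathcal S[\mathcal H(\cdot,\lambda+\varphi)]$ on the ball $\mathscr B:=\{\varphi\in\mathscr C^\alpha(\mathbf D):\|\varphi\|_{\mathscr C^\alpha}\le\rho_\circ\}$.

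For existence and the gradient bounds: as soon as $\lambda_\circ\ge 2\rho_\circ+2R$ and $|\lambda|\ge\lambda_\circ$, every $\varphi\in\mathscr B$ has $|\lambda+\varphi|\ge\tfrac12|\lambda|>R$, so $T_\lambda$ is well defined; splitting the increment of $z\mapsto\mathcal H(z,\lambda+\varphi(z))$ into its $\xi$-part and its $z$-part and using~\eqref{LipCond},~\eqref{HoldCond} gives
\[
\big\|\mathcal H(\cdot,\lambda+\varphi)\big\|_{\mathscr C^\alpha(\mathbf D)}\ \le\ 2M+\frac{4L}{|\lambda|^2}\,[\varphi]_\alpha,
\]
the decisive point being that the term $2M$ carries no factor of $[\varphi]_\alpha$. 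Hence $\|T_\lambda\varphi\|_{\mathscr C^\alpha}\le C_{\mathcal S}\big(2M+4L\lambda_\circ^{-2}\rho_\circ\big)\le\rho_\circ$ once $\lambda_\circ^2\ge 8C_{\mathcal S}L$, so $T_\lambda$ maps $\mathscr B$ into itself. Viewing $\mathscr B$ as a convex set that is compact in $\mathscr C^{\alpha'}(\mathbf D)$ for some $\alpha'<\alpha$ (Arzel\`a--Ascoli) and checking that $T_\lambda$ is $\mathscr C^{\alpha'}$-continuous there (uniform convergence $\lambda+\varphi_n\to\lambda+\varphi$ forces, by~\eqref{LipCond}, uniform convergence of the $\mathscr C^\alpha$-bounded family $\mathcal H(\cdot,\lambda+\varphi_n)$, hence its $\mathscr C^{\alpha'}$-convergence, and then $\mathcal S$ is applied), Schauder's fixed-point theorem produces $\varphi^\lambda\in\mathscr B$ with $T_\lambda\varphi^\lambda=\varphi^\lambda$. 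Then $f^\lambda:=\mathcal C[\mathcal H(\cdot,\lambda+\varphi^\lambda)]-\mathcal C[\mathcal H(\cdot,\lambda+\varphi^\lambda)](0)$ solves~\eqref{TheEquat*}, $f^\lambda(0)=0$, $f^\lambda_z=\varphi^\lambda$, $f^\lambda_{\bar z}=\mathcal H(\cdot,\lambda+\varphi^\lambda)$; by~\eqref{HoldCond} the last has modulus $\le M$, so $|f^\lambda_z|\le\rho_\circ$, $|f^\lambda_{\bar z}|\le M$. Enlarging $\lambda_\circ$ to also meet $\lambda_\circ\ge2\rho_\circ$ and $\lambda_\circ\ge2M$ turns these into~\eqref{starr},~\eqref{eq44p},~\eqref{lambdaEstimate} (for the last, $|f^\lambda_z|,|f^\lambda_{\bar z}|\le\tfrac12\lambda_\circ\le\tfrac12|\lambda|$, hence $|\lambda+f^\lambda_z|\ge\tfrac12|\lambda|$) and guarantees that the argument of $\mathcal H$ along $f^\lambda$ stays $>R$, so $f^\lambda$ is a genuine solution.

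For uniqueness and continuity in $\lambda$: $T_\lambda$ need not contract in $\mathscr C^\alpha$ --- the H\"older seminorm of $z\mapsto\mathcal H(z,\lambda+\varphi_1(z))-\mathcal H(z,\lambda+\varphi_2(z))$ need not be small --- but by~\eqref{LipCond} and the $\mathscr L^p$-boundedness of $\mathcal S$ it contracts in $\mathscr L^p(\mathbf D)$:
\[
\|T_\lambda\varphi_1-T_\lambda\varphi_2\|_{\mathscr L^p}\ \le\ \frac{4L\,\|\mathcal S\|_{\mathscr L^p}}{|\lambda|^2}\,\|\varphi_1-\varphi_2\|_{\mathscr L^p}\ \le\ \tfrac12\,\|\varphi_1-\varphi_2\|_{\mathscr L^p}
\]
when $\lambda_\circ^2\ge 8L\|\mathcal S\|_{\mathscr L^p}$; thus $\varphi^\lambda$ is unique in $\mathscr B$ and $\{f^\lambda\}$ is canonical. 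For~\eqref{circc} write $\varphi^{\lambda_1}-\varphi^{\lambda_2}=\big(T_{\lambda_1}\varphi^{\lambda_1}-T_{\lambda_1}\varphi^{\lambda_2}\big)+\big(T_{\lambda_1}\varphi^{\lambda_2}-T_{\lambda_2}\varphi^{\lambda_2}\big)$: the first term has $\mathscr L^p$-norm $\le\tfrac12\|\varphi^{\lambda_1}-\varphi^{\lambda_2}\|_{\mathscr L^p}$ by the contraction estimate, while in the second the \emph{same} $\varphi^{\lambda_2}$ is fed into both copies of $\mathcal H$, so the two arguments of $\mathcal H$ differ by the \emph{constant} $\lambda_1-\lambda_2$ and~\eqref{LipCond} gives the pointwise bound $4L|\lambda_1-\lambda_2|/(|\lambda_1||\lambda_2|)$, hence an $\mathscr L^p$-bound $\lesssim|\lambda_1-\lambda_2|/(|\lambda_1||\lambda_2|)$ after applying $\mathcal S$. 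Absorbing the first term yields $\|\varphi^{\lambda_1}-\varphi^{\lambda_2}\|_{\mathscr L^p}\lesssim|\lambda_1-\lambda_2|/(|\lambda_1||\lambda_2|)$; inserting this into $f^{\lambda_1}-f^{\lambda_2}=\mathcal C[\mathcal H(\cdot,\lambda_1+\varphi^{\lambda_1})-\mathcal H(\cdot,\lambda_2+\varphi^{\lambda_2})]-(\,\cdot\,)(0)$, splitting the source by the same device and using $\mathcal C\colon\mathscr L^p\to\mathscr L^\infty$, gives~\eqref{circc} after a last enlargement of $\lambda_\circ$.

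\textbf{The main obstacle} is the clash between two requirements. The uniform Lipschitz/gradient bound~\eqref{eq44p}--\eqref{lambdaEstimate} \emph{up to $\partial\mathbf D$} demands $\mathscr C^{1,\alpha}$-regularity, hence the H\"older hypothesis~\eqref{HoldCond} and the Schauder mapping property of $\mathcal C$ on the disk; yet because~\eqref{HoldCond} controls only first $z$-differences (not joint second differences), $T_\lambda$ is not a $\mathscr C^\alpha$-contraction, so existence must go through a compactness argument rather than iteration. Uniqueness and the sharp continuity~\eqref{circc} are then recovered in the weaker $\mathscr L^p$-topology, where $T_\lambda$ does contract, and the mechanism producing the precise factor $\big|\tfrac{\lambda_1-\lambda_2}{\lambda_1\lambda_2}\big|$ is the elementary observation that feeding one and the same $\varphi$ into $\mathcal H(\cdot,\lambda_1+\varphi)$ and $\mathcal H(\cdot,\lambda_2+\varphi)$ makes the $\xi$-arguments differ by the constant $\lambda_1-\lambda_2$, so that~\eqref{LipCond} applies with no loss. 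Throughout, the possibility of choosing $\lambda_\circ$ as a function of $\alpha,L,M,R$ alone rests on the $O(|\lambda|^{-2})$ weight with which the nonlinear correction enters.
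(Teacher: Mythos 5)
Your argument is correct in substance, but it reaches Proposition~\ref{GoodSol} by a genuinely different implementation than the paper's. The paper first extends $\mathcal H$ by a cut-off to all of $\mathbb C$, takes the density $\omega=f_{\bar z}$ (supported in $2\mathbf D$) as the unknown, and runs a single Banach fixed-point argument for $\omega=\mathbf H(z,\lambda+\mathcal S\omega)$ on a ball of the translation-invariant Besov space $\mathscr B^p_\alpha(\mathbb C)$, $p=3/\alpha$: the ball is invariant in the Besov norm, complete under the $\mathscr L^p$ metric, and the operator contracts in $\mathscr L^p$; the extension is precisely what makes the smoothness-norm boundedness of $\mathcal S$ immediate (it commutes with translations, so its $\mathscr L^p$ bound transfers to the Besov seminorm), and the global bound $\abs{\mathcal C\eta(z)}\leqslant C_p\abs{z}^{1-2/p}\norm{\eta}_p$ then yields~\eqref{circc}. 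You instead stay on the disk, take $\varphi=f_z$ as the unknown, and rely on the up-to-the-boundary Schauder estimates for the Cauchy and Beurling transforms of the disk (your identification of $\mathcal S\chi_{_{\mathbf D}}\equiv 0$ in $\mathbf D$ as the mechanism is the right one; this is classical, essentially Vekua, though the citation of~\cite{AIMb} for the boundary statement is loose), obtaining existence by Schauder's fixed-point theorem on a $\mathscr C^\alpha$-ball compact in $\mathscr C^{\alpha'}$, and then uniqueness, well-definedness of the family, and~\eqref{circc} from the $\mathscr L^p$-contraction. Both proofs rest on the same two-norm mechanism -- invariance of a ball in a smoothness norm where no contraction is available, contraction in $\mathscr L^p$ where the $O(\abs{\lambda}^{-2})$ Lipschitz constant of $\xi\mapsto\mathcal H(z,\xi)$ takes effect -- and your bookkeeping for~\eqref{starr},~\eqref{eq44p},~\eqref{lambdaEstimate} and the successive enlargements of $\lambda_\circ$ is sound and purely structural, as required. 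What the paper's route buys is the complete avoidance of boundary estimates on the disk and explicit numerical constants from one contraction principle; what yours buys is the absence of the extension step. One simplification: since the closed $\mathscr C^\alpha$-ball is complete under the $\mathscr L^p$ metric and you already prove that $T_\lambda$ is an $\mathscr L^p$-contraction mapping this ball into itself, Banach's principle gives existence and uniqueness at once, so the Schauder fixed-point/compactness detour (and the $\mathscr C^{\alpha'}$-continuity check) is unnecessary -- this is exactly how the paper argues in its Besov setting.
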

\begin{proof}
The proof of Proposition~\ref{GoodSol} is proceeded by an extension of equation~\eqref{TheEquat*} to the entire plane $\mathbb C$.

\subsection{Extension to $\mathbb C$} We set for all $z\in \mathbb C$ and $\abs{\xi} >R$,
\begin{equation}\label{ExtEquat}
\mathbf H(z, \, \xi) = \begin{cases}
\mathcal H(z, \xi) & \;\;\text{if}\;\; |z| \leqslant 1 \\
\big(2 - |z|\big) \cdot \mathcal H(1 / \bar z \,, \xi ) & \;\;\text{if}\;\; 1\leqslant |z | \leqslant 2   \\
0 & \;\;\text{for}\;\;  |z | \geqslant 2
\end{cases}
\end{equation}
It is not difficult to see that inequalities~\eqref{LipCond} and~\eqref{TheEquat} transmit to $\mathbf H$ as follows
\begin{align}
|\mathbf H(z, \xi_1) - \mathbf H(z, \xi_2) | \leqslant  L \cdot \Big|\frac{1}{\xi_1} - \frac{1}{\xi_2}\Big|\;,\;\;\text{for $\,z \in \mathbb C\,$ and $\,|\xi_1| ,\,|\xi_2| > R\,$} \label{LipCondC} \\
\underset{z\in\Omega}{\sup}\, |\mathbf H(z, \xi)| \;\;+\;\;\underset{z_1\neq z_2}{\sup} \frac{|\mathbf H(z_1, \xi) - \mathbf H(z_2, \xi)|}{|z_1 - z_2|^\alpha} \; \leqslant 6M\,,\;\;\;\text{for}\;\;|\xi| >R \label{HolCondC}
\end{align}
The verification of~\eqref{HolCondC} is a routine matter of the triangle inequality.

The desired solutions $f^\lambda \, : \mathbf D \rightarrow \mathbb C$ of~\eqref{TheEquat*} will be obtained as  restrictions to the unit disk of the solutions, still denoted by $\, f = f^\lambda \, : \mathbb C \rightarrow \mathbb C$, of the extended equation
\begin{equation}\label{TheEquatC}
f_{\bar z} = \mathbf H(z, \lambda + f_z)
\end{equation}
The advantage of passing to the extended equation lies in the use of singular integrals in the entire plane. We represent the solution in the form of the Cauchy transform of $\omega = f_{\bar z}$
\begin{equation}\label{CoTr}
 f(z) = \mathcal C \omega (z) \deff   \frac{1}{\pi} \iint_\mathbb C \Big[\frac{1}{z - \tau } \;+ \frac{1}{\tau}  \Big] \omega(\tau)
 \end{equation}
We search for the density function $\,\omega \,$ in a  Besov space $\,\mathscr B^p_\alpha(\mathbb C) \subset \mathscr L^p(\mathbb C)\,$, $\, p> \frac{2}{\alpha} > 2$. The density function will be  supported in the double disk $\, 2 \mathbf D\,$.  Recall the well  known inequality~\cite[Theorem 4.3.13]{AIMb}
\begin{equation}\label{Cauchy2}
|\mathcal C \eta (z) | \leqslant C_p \,|z|^{1 - \frac{2}{p}}\, \|\eta \|_p\;,\;\;\;z\in \mathbb C, \quad \eta \in \mathscr L^p (\mathbb C)
\end{equation}
Thus,
\begin{equation}\label{starrr}
\abs{f(z)} \leqslant C_p \|\omega \|_p , \qquad \mbox{for } z \in \mathbf D
\end{equation}

\subsection{The Besov Space $\,\mathscr B^p_\alpha(\mathbb C)\,$} Let $\,0< \alpha < 1\,$ be the H\"older exponent in~\eqref{HoldCond}.    Let us choose and fix for the rest of our paper  the integrability exponent
\begin{equation}\label{Exponent p}
 p = \frac{3}{\alpha}>3
\end{equation}
The Besov space $\,\mathscr B^p_\alpha(\mathbb C)\,$ consists of functions $\,\omega \in \mathscr L^p(\mathbb C)\,$ which satisfy
\begin{equation}
\| \,\omega \,\|_{\alpha, p} \;\deff\; \; \| \omega \|_p \; +  \;\underset{\tau \neq 0}{\sup }\, \frac{\|\, \omega(\cdot + \tau) - \omega(\cdot)   \|_p}{|\tau|^\alpha} \; <\;\infty
\end{equation}
We have a continuous imbedding $\,\mathscr B^p_\alpha(\mathbb C)\subset \mathscr L^\infty(\mathbb C) \,$ together with a uniform bound~\cite[Theorem 7.34(c)]{AF} or \cite[p. 131]{Trbook}.

\begin{equation}\label{infinityBound}
\|f_{\bar z}\|_\infty=\|\omega\|_\infty \leqslant  B_p \, \|\omega\|_{\alpha,p}
\end{equation}

\subsection{Beurling-Ahlfors Transform}
Complex derivative $\,f_z\,$ of the function in~\eqref{CoTr} is expressed by a singular integral, known as the Beurling-Ahlfors transform
\begin{equation}\label{B-ATr}
 f_z(z) = \mathcal S \omega (z) \deff  - \frac{1}{\pi} \iint_\mathbb C \frac{\omega(\tau)}{(z - \tau)^2 } \quad \text{where}\;\; \omega = f_{\bar z}
\end{equation}
We denote the norm of the operator $\,\mathcal S \,:\;\mathscr L^p(\mathbb C) \rightarrow \mathscr L^p(\mathbb C)\,$ by $\,\mathcal S_p>1$. Thus we have
\begin{equation}
\|\mathcal S \omega\|_p \leqslant \mathcal S_p\, \|\omega\|_p \;, \qquad \text{hence} \quad \|\,\mathcal S\omega \,\|_{\alpha, p} \leqslant \mathcal S_p \|\,\omega \,\|_{\alpha, p}
\end{equation}
This combined with~\eqref{infinityBound}  yields

\begin{equation}\label{infSbound}
\|f_z\|_\infty=\|\mathcal S \omega\|_\infty\; \leqslant    \mathcal S_p \, B_p \,\|\omega\|_{\alpha,p}
\end{equation}
\subsection{The structural parameter $\lambda_\circ$} We are now ready to reveal  the conditions on  the  complex parameter $\lambda$; namely,
\begin{equation}
|\lambda| \geqslant \lambda_\circ \;\deff \; \max \, \left\{\begin{array}{ll}
 \sqrt{16 \,\mathcal S_p}  \geqslant 4  & \;\;\text{condition}\;(\Lambda_1) \\
 \sqrt{81\,\mathcal S_p \,L} \geqslant  \sqrt{ 8\, \mathcal S_p \, L} & \;\;\text{condition}\;(\Lambda_2)\\
120\, \mathcal S_p\, B_p\,  M & \;\;\text{condition}\;(\Lambda_3)\\
 3 \,R & \;\;\text{condition}\;(\Lambda_4)\\
 32\, C_p \,L  \geqslant 32 L \, & \;\;\text{condition}\;(\Lambda_5)
\end{array} \right.
\end{equation}
These are not the optimal numerical values, but they are chosen for the clarity in subsequent computations.

\subsection{Solving the Extended Equation} The equation~\eqref{TheEquatC} is now equivalent to the integral equation
\begin{equation}\label{TheEquatCint}
\omega = \mathbf H(z, \lambda + \mathcal S \omega)
\end{equation}
for a density function $\,\omega = \omega^\lambda(z)\,$, which we shall find uniquely in the set
\begin{equation}\label{Ball}
\,\mathfrak B = \{ \omega \colon \|\omega\|_{\alpha, p} \,\leqslant 60\cdot M\,\}\; \subset \mathscr B_\alpha^p(\mathbb C) \subset \mathscr L^p(\mathbb C) \,
\end{equation}
This is a closed  subset of $\,\mathscr L^p(\mathbb C)\,$. We shall view $\mathfrak B$ as a complete metric space with respect to $\,\mathscr L^p$ -norm.  First observe that for each $\,\omega \in \mathfrak B\,$ we have
\begin{equation}\label{Somega}
\,\|\mathcal S \omega \|_\infty \leqslant 60\, \mathcal S_p \,B_p\, M\;\leqslant \frac{1}{2} \lambda_\circ\;,
\end{equation}
by~\eqref{infSbound} and condition~($\Lambda_3$).  This combined with conditions $\,(\Lambda _3)\,$ and $\,(\Lambda _4)\,$ yields
\begin{equation}\label{lambdaEstimate2}
 \abs{ \lambda + \mathcal S \omega (z)}
\geqslant \;\frac{1}{2} |\lambda|\; \geqslant \;\frac{1}{2} \lambda_\circ  >  R
\end{equation}
In particular, the equation~\eqref{TheEquatCint} is well defined on $\,\mathfrak B\,$.
We now introduce a nonlinear  operator $\,\texttt{T} \,:\,\,\mathfrak B \rightarrow \mathscr L^p(\mathbb C)\,$ by the rule
\begin{equation}\label{TheEquatCm}
 \texttt{T} \omega = \mathbf H(z, \lambda + \mathcal S \omega)
\end{equation}
Clearly, $\, \textnormal{supp}\, \texttt{T}\omega \subset 2\mathbf D\,$. Note, by condition ~\eqref{HolCondC},  that
\begin{equation}\label{TheEquatCmm}
\|\texttt{T} \omega\|_p \,= \,\|\mathbf H(z, \lambda + \mathcal S \omega)\|_p\, \leqslant 6 M\cdot \|\chi_{_{2\mathbf D}}\|_p = 6\,(4\pi)^{\alpha/3} M\leqslant 18\cdot M
\end{equation}
Next we estimate the difference quotient for $\,\texttt{T} \omega\,$ . To this effect we consider two cases

\textit{Case 1} : $\,|\tau| \leqslant 1\,$. For all $z\in \mathbb C$, we can write
\begin{equation}\label{eq678}
\begin{split}
& |(\texttt{T} \omega)(z+\tau) - (\texttt{T} \omega)(z)\,| \\& \leqslant  |\mathbf H\big(z+\tau , \;\lambda + \mathcal S \omega (z+\tau)\big) \; -\; \mathbf H\big(z+\tau ,\; \lambda + \mathcal S \omega (z)\big)\,| \\ & +  |\mathbf H\big(z+\tau , \lambda + \mathcal S \omega (z)\big) \; -\; \mathbf H\big(z , \lambda + \mathcal S \omega (z)\big)\,|
\end{split}
\end{equation}
The first summand will be  estimated by using~\eqref{LipCondC} and~\eqref{lambdaEstimate2},
\[   |\mathbf H\big(z+\tau , \;\lambda + \mathcal S \omega (z+\tau)\big) \; -\; \mathbf H\big(z+\tau ,\; \lambda + \mathcal S \omega (z)\big)\,|  \leqslant \frac{4 L}{|\lambda|^2}\, \big| \mathcal S \omega (z+\tau) \; -\;  \mathcal S \omega (z)\,\big|\]
For the second summand in~\eqref{eq678} we use H\"older's estimate in~\eqref{HolCondC}
\[ |\mathbf H\big(z+\tau , \lambda + \mathcal S \omega (z)\big) \; -\; \mathbf H\big(z , \lambda + \mathcal S \omega (z)\big)\,|  \leqslant   6 M |\tau |^\alpha \]
We add both summands to obtain
\[
|(\texttt{T} \omega)(z+\tau) -  (\texttt{T} \omega)(z)\,|  \leqslant \frac{4 L}{|\lambda|^2}\, \big| \mathcal S \omega (z+\tau) \; -\;  \mathcal S \omega (z)\,\big|\;+\; 6 M |\tau |^\alpha \cdot \chi_{_{3 \mathbf D}}(z)
\]
The introduction of the factor $\,\chi_{_{3 \mathbf D}}(z)\,$ is legitimate beacuse the left hand side vanishes  outside the disk $\,3 \mathbf D \,$. We now compare the $\,\mathscr L^p(\mathbb C)\,$ -norms (with respect to $z$-variable) of both sides.
\[
\begin{split}
\|\;(\texttt{T} \omega)(\cdot +\tau) - & (\texttt{T} \omega)(\cdot)\,\,\|_p \\ \leqslant \;
&\frac{4 L \mathcal S_p }{|\lambda|^2}\, \|\,\omega (\cdot+\tau) \; -\;  \omega (\cdot)\,\|_p\;+\; 6 M |\tau |^\alpha \cdot (9 \pi) ^{1/p}
\end{split}
\]
Hence, by the condition $(\Lambda_2)$,
\[
\begin{split}
\frac{\|\,(\texttt{T} \omega)(\cdot+\tau) - (\texttt{T} \omega)(\cdot)\,\|_p} {|\tau|^\alpha} &\leqslant
\frac{4 L \mathcal S_p }{|\lambda|^2}\cdot 60\, M\;+\; 33 \,M  \leqslant 36 M
\end{split}
\]
which together with~\eqref{TheEquatCmm} yields
\[
\|\,\texttt{T} \omega\,\|_{ p }\; + \frac{\|\,(\texttt{T} \omega)(\cdot+\tau) - (\texttt{T} \omega)(\cdot)\,\|_p} {|\tau|^\alpha}\; \leqslant 54\, M \;\leqslant 60\, M
\]

\textit{Case 2} : $\,|\tau| \geqslant 1\,$.  It suffices to use rough $\mathscr L^p$-bound

\begin{equation}\nonumber
\|\,\texttt{T} \omega\,\|_{ p }\; + \frac{\|\,(\texttt{T} \omega)(\cdot+\tau) - (\texttt{T} \omega)(\cdot)\,\|_p} {|\tau|^\alpha} \,\leqslant  3 \,\| \texttt{T} \omega \|_p  \leqslant  54\, M \;\leqslant 60\, M
\end{equation}
Now we see that in both cases 1 and 2  we have a desired estimate $\|\,\texttt{T} \omega\,\|_{\alpha, p } \;\leqslant \;60 \,M$, meaning that
\begin{equation}
\texttt{T} \colon \mathfrak B \rightarrow  \mathfrak B
\end{equation}

Next we show that $\,\texttt{T} \,:\,\mathfrak B \rightarrow  \mathfrak B\,$ is a contraction in the $\,\mathscr L^p$-norm. Let $\,\omega_1\,, \,\omega_2\,\in \mathfrak B\,$.  By~\eqref{LipCondC} and~\eqref{lambdaEstimate2}  we see that

\begin{equation}
\begin{split}
\|\,\texttt{T}\omega_1 \;-  \texttt{T}\omega_2\,\|_p \; &\leqslant  \;\;L\;\Big \|\frac{1}{\lambda + \mathcal S \omega_1} -  \frac{1}{\lambda + \mathcal S \omega_2}  \Big \|_p \;\\ & \leqslant \frac{4 L}{|\lambda|^2}\; \big \| \mathcal S (\omega_1 - \omega_ 2) \big \|_p \\& \leqslant \frac{4 L\, \mathcal S_p }{|\lambda|^2}\; \big \| \,\omega_1 - \omega_ 2\, \big \|_p
\end{split}
\end{equation}
Thus the contraction constant is at most $\,\frac{1}{2}\,$,  because of the condition $\,(\Lambda_ 2)$.

By virtue of  Banach contraction principle  the equation ~\eqref {TheEquatCint}  has exactly one solution $\, \omega = \omega^\lambda \in \mathfrak B$.  We define $ f^\lambda (z) \deff \mathcal C \omega^\lambda (z)$.

It remains to examine the properties of $f^\lambda$.  Using~\eqref{infinityBound},~\eqref{infSbound} and~\eqref{Ball} we estimate its derivatives as follows.
\[
\| f^\lambda_{\bar z} \|_\infty \le B_p \, \|\omega^\lambda \|_{\alpha, p} \leqslant 60 B_p \, M < \frac{1}{2} \lambda_\circ \quad \mbox{ for all } z \in \C
\]
and
\[
\| f^\lambda_{z} \|_\infty  = \|\mathcal S f^\lambda_{\bar z} \|_\infty \leqslant \mathcal S_p \, B_p \, \|\omega^\lambda \|_{\alpha, p} \leqslant 60 \mathcal S_p \, B_p \, M \leqslant \frac{1}{2} \lambda_\circ, \quad z \in \C
\]
Thus
\begin{equation}
\|\,\nabla f^\lambda \,\|_\infty \;\leqslant \; \| f^\lambda_z\|_\infty +  \| f^\lambda_{\bar z}\|_\infty  <  \lambda_\circ
\end{equation}
which implies
\begin{equation}\label{ssssstar}
\abs{f^\lambda (z)} < \lambda_\circ , \qquad z \in \mathbf D
\end{equation}
This proves~\eqref{starr} in Proposition~\ref{GoodSol}. Concerning continuity with respect to the parameter $\, \lambda\,$, let $\, |\lambda_1| ,\; |\lambda_2| \,\geqslant \lambda_\circ$.
\begin{equation*}
\begin{split}
\|\,\omega^{\lambda_1} \;- \;\omega^{\lambda_2}\,\|_p \; &\leqslant  \;\;L\;\Big \|\frac{1}{\lambda_1 + \mathcal S \omega^{\lambda_1}} -  \frac{1}{\lambda_2 + \mathcal S \omega^{\lambda_2}}  \Big \|_{\mathscr L^p(2\mathbf D)} \;\\ & \leqslant \frac{4 L}{|\lambda_1 \lambda_2|}\; \big \| (\lambda_1 - \lambda_2 ) \;+\;\mathcal S (\omega^{\lambda_1} - \omega^{\lambda_ 2}) \big \|_{\mathscr L^p(2\mathbf D)} \\&  \leqslant \,\frac{4 L}{|\lambda_1 \lambda_2|}\; \big \| \,\lambda_1 - \lambda_2 \, \|_{\mathscr L^p(2\mathbf D)}\;+\; \frac{4 L\, \mathcal S_p }{|\lambda_\circ |^2 }\; \big \| \,\omega^{\lambda_1} - \omega^{\lambda_ 2}\, \big \|_{\mathscr L^p (\mathbb C)} \\&
\,\leqslant \,\frac{16 L}{|\lambda_1 \lambda_2|}\; | \lambda_1 - \lambda_2  | \;+\; \frac{1}{2}\; \big \| \,\omega^{\lambda_1} - \omega^{\lambda_ 2}\, \big \|_p
\end{split}
\end{equation*}
Hence
\[
\|\,\omega^{\lambda_1} \;- \;\omega^{\lambda_2}\,\|_p \; \leqslant \; \frac{32\, L}{|\lambda_1 \lambda_2|}\; | \lambda_1 - \lambda_2 |
\]
Finally, we recall the estimate~\eqref{Cauchy2} for Cauchy transform, $\abs{\mathcal C \eta (z)}  \leqslant C_p \,|z| ^{1 - \frac{2}{p}}\, \| \eta \|_p$ for all $z \in \mathbb C$, which we apply to $\eta = \omega^{\lambda_1}- \omega^{\lambda_2}$. This yields

\[
|\,f^{\lambda_1}(z) \;- \;f^{\lambda_2}(z)\,| \; \leqslant \; \frac{32\,C_p L}{|\lambda_1 \lambda_2|}\; | \lambda_1 - \lambda_2 |  \;\leqslant \lambda_\circ \Big|\frac{\lambda_1 - \lambda_2}{\lambda_1\, \lambda_2} \Big| \;,\;\;\;\text{for all}\;\;\; z \in \mathbf D
\]
by the condition $(\Lambda_5)$,
establishing the inequality~\eqref{circc}  in Proposition~\ref{GoodSol}.
\end{proof}

We remark that $f^\lambda$ is $\mathscr C^{1, \alpha}$-smooth.

\section{The difference of two solutions}\label{difference}
In this section we consider the difference of two solutions of~\eqref{TheEquat}, namely the given solution $h$ and the good one $F^\lambda (z) = \lambda z+f^\lambda (z)e$ constructed in the previous section. Precisely, let
\begin{equation}\label{maybeeven}
g^\lambda (z) = \lambda z + f^\lambda (z)-h(z)
\end{equation}
We shall estimate the distortion of $\,g^\lambda\,$ under the assumption that
\begin{equation}\label{Abound}
\| h_{\bar z }(z) \|_{\mathscr L^\infty (\mathbf D)} \deff N< \infty
\end{equation}
To this effect we must impose additional  bound from below on the complex parameters $\,\lambda\,$, which will now depend on the solution $\,h\,$.

\begin{proposition}\label{distpro} Let $h$ be as in Theorem~\ref{MainTheorem} and  $f^\lambda$ as in Proposition~\ref{GoodSol}. If
\begin{equation}\label{LambdaBound}
|\lambda| \geqslant  4 N + 4 \lambda_\circ +R
\end{equation}
then the difference function $\,g^\lambda$\, in ~\eqref{maybeeven} satisfies
\begin{equation}\label{BeltramiInequality}
\big|\, g^\lambda_{\bar z} (z) \,\big|\; \leqslant \;\frac{1}{2}\; \big|\, g^\lambda_z (z) \,\big|\;, \;\;\;\; \text{almost everywhere in $\,\mathbb D\,$ }
\end{equation}
\end{proposition}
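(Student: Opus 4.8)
The plan is to read off the complex derivatives of $g^\lambda$ from~\eqref{maybeeven} and then compare $|g^\lambda_{\bar z}|$ with $|g^\lambda_z|$ pointwise almost everywhere, splitting $\mathbb D$ according to the size of $|h_z(z)|$: where $|h_z|$ is large the equation~\eqref{TheEquat} is active and the structural condition~\eqref{LipCond} can be exploited, whereas where $|h_z|$ is small the linear term $\lambda z$ in $g^\lambda$ dominates and crude bounds suffice. From~\eqref{maybeeven} one has $g^\lambda_z=\lambda+f^\lambda_z-h_z$ and $g^\lambda_{\bar z}=f^\lambda_{\bar z}-h_{\bar z}$; since $h\in\mathscr W^{1,2}(\mathbb D)$ and $f^\lambda\in\mathscr C^{1,\alpha}$, the difference $g^\lambda$ lies in $\mathscr W^{1,2}_{\loc}(\mathbb D)$, so that~\eqref{BeltramiInequality} will genuinely exhibit $g^\lambda$ as a quasiregular map.

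First I would record the consequences of the hypothesis~\eqref{LambdaBound}: it gives at once $\lambda_\circ\le\tfrac14|\lambda|$ and $N\le\tfrac14|\lambda|$, and, combined with conditions $(\Lambda_2)$ and $(\Lambda_4)$ on $\lambda_\circ$, also $R\le\tfrac18|\lambda|$ and $|\lambda|^2\ge32\,L$. From Proposition~\ref{GoodSol} (applicable since $|\lambda|\ge\lambda_\circ$) I have the bounds $|f^\lambda_z|,|f^\lambda_{\bar z}|\le\tfrac12\lambda_\circ$ and $\tfrac12|\lambda|\le|\lambda+f^\lambda_z|$ on all of $\mathbf D$. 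With $N=\|h_{\bar z}\|_\infty$ from~\eqref{Abound}, these yield the two ``rough'' estimates
\[
|g^\lambda_{\bar z}|\le|f^\lambda_{\bar z}|+|h_{\bar z}|\le\tfrac12\lambda_\circ+N,\qquad
|g^\lambda_z|\ge|\lambda|-|f^\lambda_z|-|h_z|\ge|\lambda|-\tfrac12\lambda_\circ-|h_z|,
\]
valid for a.e.\ $z\in\mathbb D$.

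The argument then splits into two cases. If $|h_z(z)|\le\tfrac18|\lambda|$ --- a region that contains all of $\{|h_z|\le R\}$ because $R\le\tfrac18|\lambda|$ --- the rough estimates give $|g^\lambda_{\bar z}|\le\tfrac12\lambda_\circ+N\le\tfrac18|\lambda|+\tfrac14|\lambda|=\tfrac38|\lambda|$ and $|g^\lambda_z|\ge|\lambda|-\tfrac18|\lambda|-\tfrac18|\lambda|=\tfrac34|\lambda|$, so that $|g^\lambda_{\bar z}|\le\tfrac12|g^\lambda_z|$. If instead $|h_z(z)|>\tfrac18|\lambda|$, then $|h_z(z)|>\tfrac18|\lambda|\ge\tfrac12\lambda_\circ\ge R$, so for a.e.\ such $z$ the equation~\eqref{TheEquat} holds, i.e.\ $h_{\bar z}=\mathcal H(z,h_z)$; likewise $f^\lambda_{\bar z}=\mathcal H(z,\lambda+f^\lambda_z)$ and $|\lambda+f^\lambda_z|\ge\tfrac12|\lambda|>R$. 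Since $\lambda+f^\lambda_z=g^\lambda_z+h_z$ and both $h_z$ and $\lambda+f^\lambda_z$ lie in $\widehat{\mathbb C}_R$, the condition~\eqref{LipCond} applies and gives
\[
|g^\lambda_{\bar z}|=\bigl|\mathcal H(z,\lambda+f^\lambda_z)-\mathcal H(z,h_z)\bigr|
\le L\,\Bigl|\frac{1}{\lambda+f^\lambda_z}-\frac{1}{h_z}\Bigr|
=\frac{L\,|g^\lambda_z|}{|\lambda+f^\lambda_z|\,|h_z|}
\le\frac{16\,L}{|\lambda|^2}\,|g^\lambda_z|
\le\tfrac12|g^\lambda_z|,
\]
where the last two steps use $|\lambda+f^\lambda_z|\ge\tfrac12|\lambda|$, $|h_z|>\tfrac18|\lambda|$ and $|\lambda|^2\ge32\,L$. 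Taking the two cases together proves~\eqref{BeltramiInequality}.

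The only step that needs any care --- the ``hard part'', modest though it is --- is the calibration of the threshold separating the two regions (taken here as $\tfrac18|\lambda|$) against the lower bound imposed on $|\lambda|$: the threshold has to be small enough that $\lambda z$ dominates both $f^\lambda_z$ and $h_z$ in the first region, yet large enough that the gain $\bigl(|\lambda|\,|h_z|\bigr)^{-1}$ furnished by~\eqref{LipCond} overcomes the constant $L$ in the second region. One must also verify in the second region that \emph{both} arguments fed into $\mathcal H$, namely $h_z$ and $\lambda+f^\lambda_z$, genuinely have modulus exceeding $R$; it is precisely this requirement that forces $|\lambda|$ to absorb an additive $R$, as in~\eqref{LambdaBound}. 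The numerical constants in~\eqref{LambdaBound} are not optimal and, like those in Proposition~\ref{GoodSol}, are chosen merely so that all of these elementary inequalities pass.
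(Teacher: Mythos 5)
Your proposal is correct; every estimate checks out with the stated constants ($\lambda_\circ\le\tfrac14|\lambda|$, $N\le\tfrac14|\lambda|$, $R\le\tfrac1{12}|\lambda|$ via $(\Lambda_4)$, and $|\lambda|^2\ge 128\,L$ via $(\Lambda_2)$), and the equation for $f^\lambda$ is indeed in force everywhere because $|\lambda+f^\lambda_z|\ge\tfrac12|\lambda|>R$. The overall strategy is the same as the paper's --- split $\mathbb D$ according to the size of $|h_z|$, use crude bounds where $|h_z|$ is small and the structural condition~\eqref{LipCond} where the equation is active --- but the execution of the second case differs in a genuine way. The paper splits exactly at $|h_z|\le R$ versus $|h_z|>R$, and since in the latter region $|h_z|$ can be arbitrarily close to $R$ (possibly $R=0$), the factor $1/|h_z|$ coming from~\eqref{LipCond} is dangerous; the paper neutralizes it with the identity $(\lambda+f_z)\,g_{\bar z}=g_z\,g_{\bar z}+h_z\,(f_{\bar z}-h_{\bar z})$, which multiplies the Lipschitz estimate by $|h_z|$ and cancels that denominator, at the price of carrying the extra term $|g_z|\,|g_{\bar z}|$ and estimating $|g_{\bar z}|\le N+\tfrac12\lambda_\circ$ there. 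You instead raise the threshold to $|h_z|\le\tfrac18|\lambda|$, so that in the Lipschitz regime $|h_z|$ is comparable to $|\lambda|$ and the direct bound $|g_{\bar z}|\le\frac{16L}{|\lambda|^2}|g_z|\le\tfrac12|g_z|$ suffices with no algebraic trick, while the enlarged ``small $|h_z|$'' region is still handled by the same rough estimates because the threshold remains a small fraction of $|\lambda|$. Your version is arguably more transparent; the paper's identity is what lets it keep the natural threshold $R$ dictated by the definition of a solution. One minor remark: the additive $R$ in~\eqref{LambdaBound} is not actually needed in your calibration (it is absorbed by $(\Lambda_4)$ and $|\lambda|\ge4\lambda_\circ$), so your closing comment that the requirement $|h_z|,|\lambda+f^\lambda_z|>R$ ``forces'' the $+R$ is not quite accurate --- in the paper that term is used in the first case, where $|h_z|\le R$ is subtracted from $|\lambda|$ --- but this does not affect the validity of your proof.
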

\begin{proof}
Fix any point $\,z\in \mathbb D\,$ at which $h_z$ and $h_{\bar z}$ are defined.
To simplify writing we omit the superscript $\,\lambda\,$, so from now on  $\, f = f^\lambda\,$ and $\,  g = g^\lambda \,$. Thus
 \[
 g_{\bar z} = f_{\bar z} \;- \;h_{\bar z}\;\quad\quad \text{and}\quad\quad  g_z = \;\lambda + f_z \, - \,h_z
 \]
 We begin with a simple case

 \textit{Case 1}:  $\abs{h_z} \leqslant R$. The computation goes as follows:
 \[
 \begin{split}
 2 \abs{g_{\bar z}} &\leqslant 2 \left( \abs{f_{\bar z}} + \abs{h_{\bar z}}\right) \leqslant 2 \left( \frac{1}{2} \lambda_\circ +N \right)\\
 & \leqslant \abs{\lambda} -  \frac{1}{2} \lambda_\circ -R \leqslant \abs{\lambda} - \abs{f_z}- \abs{h_z} \leqslant \abs{g_z}
 \end{split}
 \]

 \textit{Case 2}: $\abs{h_z} > R$, so we may apply the equation~\eqref{TheEquat}. First, we have the identity
 \[
 (\lambda + f_z) \,g_{\bar z} =  ( g_z  + h_z) \,g_{\bar z} = g_z \,g_{\bar z}  + \;h_z\,(f_{\bar z} - h_{\bar z} )
 \]
 Hence, by~\eqref{lambdaEstimate}
 \begin{equation}\label{Est}
 \frac{1}{2} |\lambda| \cdot|g_{\bar z}| \leqslant |\lambda + f_z|\cdot | g_{\bar z} | \leqslant  | g_z |\cdot | g_{\bar z} |  + \; | h_z |\cdot |f_{\bar z} - h_{\bar z} |
 \end{equation}
 Here in the right hand side we estimate the term $\,| g_{\bar z} |\,$ by using~\eqref{Abound} and~\eqref{eq44p}
 \[
 | g_{\bar z} | \leqslant | h_{\bar z} | \;+\;| f_{\bar z} | \leqslant N + \frac{1}{2} \lambda_\circ
 \]
 For the second term in the right hand side of~\eqref{Est} we appeal to the equation~\eqref{TheEquat} and the Lipschitz condition~\eqref{LipCond},
 \begin{equation}
 \begin{split}| h_z |\cdot |f_{\bar z} - h_{\bar z} |& =  | h_z |\cdot \big| \mathcal H(z ,\lambda + f_z)  - \mathcal H(z , h_z) \big | \\&       \leqslant \frac{L |\lambda + f_z - h_z |}{|\lambda + f_z |} = \frac{L |g_z |}{|\lambda + f_z |} \leqslant  \frac{2 L}{\lambda_\circ } |g_z |\, \leqslant  \,\frac{1}{2}\,\lambda_\circ  \,|g_z|
 \end{split}
 \end{equation}
 For the last two estimates we have used ~\eqref{lambdaEstimate}  and condition ($\Lambda_2$).
 Returning to~\eqref{Est}, in view of $\,|\lambda + f_z| \geqslant \frac{1}{2} |\lambda|\,$,   we arrive at the desired estimate
\[
 |\lambda|\cdot | g_{\bar z} | \leqslant  ( 2N + 2 \lambda_\circ ) \cdot |g_z|\;\;\;
 \]
 That is,
 \[
  | g_{\bar z} | \leqslant  \frac{ 2N +  2\lambda_\circ }{|\lambda|} \cdot |g_z| \;\leqslant \frac{1}{2} \; | g_z | \qedhere
 \]
\end{proof}

\begin{corollary}\label{ogu}
Under the assumptions of Proposition~\ref{distpro} the mapping $g^\lambda$ is quasiregular, hence either constant or both open and discrete.
\end{corollary}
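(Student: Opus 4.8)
The plan is to simply read off the two requirements in Definition~\ref{qrdef} and then invoke the standard openness/discreteness dichotomy for quasiregular maps. First I would check the Sobolev regularity: by \eqref{maybeeven} we have $g^\lambda = \lambda z + f^\lambda - h$, where $\lambda z$ is smooth, $f^\lambda$ is $\mathscr C^{1,\alpha}$-smooth (the remark following Proposition~\ref{GoodSol}) with $\abs{\nabla f^\lambda}\le \lambda_\circ$ by \eqref{eq44p}, hence $f^\lambda\in \W^{1,\infty}(\mathbf D)\subset \W^{1,2}_{\loc}(\mathbb D)$, and $h\in \mathscr W^{1,2}(\mathbb D)$ by the hypothesis of Theorem~\ref{MainTheorem} (equivalently Theorem~\ref{LipHopf3}). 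Therefore $g^\lambda\in \W^{1,2}_{\loc}(\mathbb D)$.

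Next, Proposition~\ref{distpro} applies since we are assuming its hypothesis $\abs{\lambda}\ge 4N+4\lambda_\circ+R$, and it yields the distortion inequality~\eqref{BeltramiInequality}, namely $\abs{g^\lambda_{\bar z}}\le \tfrac12\,\abs{g^\lambda_z}$ almost everywhere in $\mathbb D$. This is precisely the condition in Definition~\ref{qrdef} with the fixed constant $k=\tfrac12<1$, so $g^\lambda$ is quasiregular. Finally I would invoke the classical fact recalled right after Definition~\ref{qrdef}: a nonconstant quasiregular mapping is both open and discrete (e.g. \cite[Corollary 5.5.2]{AIMb} or \cite[Chapter VI]{LV}). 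This gives the stated dichotomy: $g^\lambda$ is either constant or both open and discrete.

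I do not anticipate any real obstacle here; the corollary is essentially a repackaging of Proposition~\ref{distpro} together with already-cited structure theory. The only point requiring a moment's care is the $\W^{1,2}_{\loc}$ membership of $g^\lambda$, which is immediate once one records the uniform Lipschitz bound on $f^\lambda$ from Proposition~\ref{GoodSol} and the standing assumption $h\in\mathscr W^{1,2}$.
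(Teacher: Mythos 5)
Your proposal is correct and follows exactly the route the paper intends (the corollary is stated without proof precisely because it is the immediate combination of the distortion inequality~\eqref{BeltramiInequality} with Definition~\ref{qrdef} and the cited openness/discreteness theorem for nonconstant quasiregular maps). Your extra check that $g^\lambda\in\W^{1,2}_{\loc}$, via the Lipschitz bound on $f^\lambda$ and $h\in\mathscr W^{1,2}$, is a sensible bit of bookkeeping and fully consistent with the paper.
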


\subsection{Topological degree}

Consider a continuous family of mappings
\begin{equation}\nonumber
\begin{cases}
G^\lambda(z) = \frac{1}{\lambda} \; g^\lambda (z) =  z + \frac{1}{\lambda}\;\big [ f^\lambda(z) \;-\; h(z)  \big] \,,\;\text{for}\;\; z \in \mathbf D \;\;\text{and}\;\; |\lambda| \geqslant \sigma \\
G^\infty(z)  \equiv z
\end{cases}
\end{equation}
where
\begin{equation}\label{sigma}
\sigma = 3\| h \|_{\mathscr L^\infty(\mathbf D)}  + 4 \|h_{\bar z} \|_{\mathscr L^\infty(\mathbf D)}
+5\lambda_\circ
\end{equation}

From~\eqref{ssssstar} we have the uniform bound
\[
 |G^\lambda(z)  - z  |  =  \frac{1}{|\lambda|}\cdot\big | f^\lambda(z) \;-\; h(z)  \big | \leqslant  \frac{1}{|\lambda|} \cdot ( \lambda_\circ + \|h\|_\infty )   < \frac{1}{3}
\]
By Corollary~\ref{ogu} the mapping $ \, G^\lambda  \colon \mathbf D \rightarrow \mathbb C\,$ is quasiregular. Furthermore,
it is nonconstant because for $\,\frac{1}{3} < |z| < 1\,$ we have $\,|G^\lambda(z) | > |z| - \frac{1}{3} > 0 = |G^\lambda(0)|$.
Thus $\,G^\lambda(z)\,$ is open and discrete.  By Lemma~\ref{Rouche's Lemma} we conclude that  $\,G^\lambda(z_1) \neq G^\lambda(z_2)\,$, whenever $\,z_1\,$  and $\,z_2\,$ are distinct points in $\,\frac{1}{3} \mathbf D\,$ and
$\,|\lambda| \ge \sigma \,$.
This reads as follows

\begin{corollary} For all complex parameters $\,\lambda\,$ with $\,|\lambda| \ge\sigma $ the mappings $\,g^\lambda (z) = \lambda z +  f^\lambda(z) - h(z)\,$ are injective in the disk $\,\frac{1}{3}\mathbf D\,$; that is,
for $z_1\neq z_2\;$ in $\frac{1}{3}\mathbf D$
\begin{equation}\label{inequalities}
h(z_1) - h(z_2)\; \neq \; \lambda(z_1 - z_2)  + f^\lambda(z_1) - f ^ \lambda(z_2)\
\end{equation}
\end{corollary}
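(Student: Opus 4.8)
The plan is to deduce the corollary from the Rouch\'e type Lemma~\ref{Rouche's Lemma}, applied to the normalized family $G^\lambda(z) = \lambda^{-1} g^\lambda(z) = z + \lambda^{-1}\bigl(f^\lambda(z) - h(z)\bigr)$, together with $G^\infty(z) \equiv z$ and threshold $\varrho = \sigma$ as in~\eqref{sigma}. There are three hypotheses of that lemma to verify. Hypothesis (i) holds by definition. For hypothesis (ii), the uniform bound $\abs{f^\lambda(z)} < \lambda_\circ$ from~\eqref{ssssstar} gives $\abs{G^\lambda(z) - z} \le \abs{\lambda}^{-1}\bigl(\lambda_\circ + \norm{h}_{\mathscr L^\infty(\mathbf D)}\bigr)$ on $\mathbf D$; since $\sigma \ge 3\bigl(\lambda_\circ + \norm{h}_{\mathscr L^\infty(\mathbf D)}\bigr)$, this stays strictly below $\tfrac13$ for every $\abs{\lambda} \ge \sigma$.

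The step that needs a moment of care is hypothesis (iii): that each $G^\lambda$, $\abs{\lambda} \ge \sigma$, is orientation preserving, open and discrete on $\mathbb D$. First I would check that $\abs{\lambda} \ge \sigma$ already forces $\abs{\lambda} \ge 4N + 4\lambda_\circ + R$, the hypothesis of Proposition~\ref{distpro}: writing $\sigma = 3\norm{h}_{\mathscr L^\infty(\mathbf D)} + 4N + 5\lambda_\circ$ and using $\lambda_\circ \ge 3R$ from condition $(\Lambda_4)$, one has $5\lambda_\circ \ge 4\lambda_\circ + R$. Hence $g^\lambda$ obeys the distortion inequality~\eqref{BeltramiInequality}, so $g^\lambda$ is quasiregular by Corollary~\ref{ogu}; post-composing with the invertible linear map $w \mapsto \lambda^{-1}w$ leaves this unchanged, so $G^\lambda$ is quasiregular, hence orientation preserving. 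To see that $G^\lambda$ is nonconstant I would invoke hypothesis (ii) once more: for $\tfrac13 < \abs{z} < 1$ it gives $\abs{G^\lambda(z)} \ge \abs{z} - \tfrac13 > 0 = \abs{G^\lambda(0)}$. A nonconstant planar quasiregular mapping is automatically open and discrete (by the facts recalled before Lemma~\ref{lemmadeg}), which completes hypothesis (iii).

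Granting the three hypotheses, Lemma~\ref{Rouche's Lemma} yields that for every $z_\circ \in \tfrac13\mathbf D$ and every $\abs{\lambda} \ge \sigma$ the only solution of $G^\lambda(z) = G^\lambda(z_\circ)$ in $\mathbf D$ is $z = z_\circ$; that is, $G^\lambda$ is injective on $\tfrac13\mathbf D$, and hence so is $g^\lambda = \lambda\,G^\lambda$. It then only remains to unwind injectivity: if $z_1 \ne z_2$ lie in $\tfrac13\mathbf D$ then $g^\lambda(z_1) \ne g^\lambda(z_2)$, i.e. $\lambda z_1 + f^\lambda(z_1) - h(z_1) \ne \lambda z_2 + f^\lambda(z_2) - h(z_2)$, which rearranges to~\eqref{inequalities}. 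The only genuine obstacle is the bookkeeping of constants, namely confirming that the single number $\sigma$ in~\eqref{sigma} is simultaneously large enough for the displacement bound $\abs{G^\lambda - z} < \tfrac13$ and for the quasiregularity hypothesis of Proposition~\ref{distpro}; once that is in place the statement is a formal consequence of the topological lemma.
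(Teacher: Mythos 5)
Your proposal is correct and is essentially the paper's own argument: the same normalized family $G^\lambda(z)=z+\lambda^{-1}\bigl(f^\lambda(z)-h(z)\bigr)$ with $G^\infty(z)\equiv z$, the displacement bound from~\eqref{ssssstar}, quasiregularity via Proposition~\ref{distpro} and Corollary~\ref{ogu}, nonconstancy from $\abs{G^\lambda(z)}>\abs{z}-\tfrac13$ on the annulus, and then Lemma~\ref{Rouche's Lemma} with $\varrho=\sigma$, unwound into~\eqref{inequalities}. Your explicit check that $\abs{\lambda}\ge\sigma$ already implies $\abs{\lambda}\ge 4N+4\lambda_\circ+R$ (via $(\Lambda_4)$) is a small piece of bookkeeping the paper leaves implicit, and it is verified correctly.
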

We shall infer from this, using topological degree arguments,  the following inequality

\begin{lemma}\label{TheInequalitylem} For every circle $\mathbb T_\rho= \{\lambda \colon \abs{\lambda}=\rho\}$  with $\,\rho \geqslant \sigma\,$ there exists $\,\lambda\in \mathbb T_\rho\,$ such that
\begin{equation}\label{TheInequality}
|h(z_1) - h(z_2)|\; \leqslant\; |\lambda(z_1 - z_2)  + f^\lambda(z_1) - f ^ \lambda(z_2) |
\end{equation}
\end{lemma}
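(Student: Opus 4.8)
The plan is to run a topological degree (winding number) argument in the parameter $\lambda$, in the same spirit as the inequality~\eqref{eq34} in the proof of Theorem~\ref{LipHopf}. The new feature is that the quantity on the right of~\eqref{TheInequality} is no longer a constant multiple of a fixed complex number as $\lambda$ runs over $\mathbb T_\rho$, so we cannot simply deduce the strict inequality for every $\lambda$; instead a degree count will produce at least one admissible $\lambda$. First I would fix distinct points $z_1,z_2\in\tfrac13\mathbf D$ and a radius $\rho\ge\sigma$, and abbreviate $a=h(z_1)-h(z_2)\in\C$ together with $\gamma(\lambda)=\lambda(z_1-z_2)+f^\lambda(z_1)-f^\lambda(z_2)$ for $|\lambda|\ge\sigma$. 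By~\eqref{circc} the map $\lambda\mapsto f^\lambda(z)$ is continuous on $\{|\lambda|\ge\sigma\}$, so $\gamma$ is continuous there; and the Corollary preceding this lemma (inequality~\eqref{inequalities}) says precisely that $\gamma(\lambda)\ne a$ for every $\lambda$ with $|\lambda|\ge\sigma$. Arguing by contradiction, I assume that~\eqref{TheInequality} fails on all of $\mathbb T_\rho$, i.e. $|\gamma(\lambda)|<|a|$ for every $\lambda\in\mathbb T_\rho$.

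Next I would introduce, for each $\varrho\ge\sigma$, the winding number $n(\varrho)$ of the loop $\lambda\mapsto\gamma(\lambda)$, $|\lambda|=\varrho$, about the point $a$; this is a well-defined integer because $\gamma$ omits the value $a$ on $\{|\lambda|\ge\sigma\}$. Since $\varrho\mapsto\bigl(\theta\mapsto\gamma(\varrho e^{i\theta})\bigr)$ is a homotopy of loops inside $\C\setminus\{a\}$, the index is homotopy invariant and $n(\varrho)\equiv n$ is independent of $\varrho\in[\sigma,\infty)$. To evaluate $n$, I take $\varrho$ large, say $\varrho>\lambda_\circ+|a|/|z_1-z_2|$, and split $\gamma(\lambda)=\lambda(z_1-z_2)+\delta(\lambda)$ with $\delta(\lambda)=f^\lambda(z_1)-f^\lambda(z_2)$, where $|\delta(\lambda)|\le\lambda_\circ|z_1-z_2|$ uniformly in $\lambda$ by~\eqref{starr}. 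Along the straight-line homotopy $\lambda(z_1-z_2)+t\,\delta(\lambda)$, $0\le t\le1$, $|\lambda|=\varrho$, one has $|\lambda(z_1-z_2)+t\delta(\lambda)-a|\ge\varrho|z_1-z_2|-|a|-\lambda_\circ|z_1-z_2|>0$, so the homotopy stays in $\C\setminus\{a\}$; hence $n$ equals the index about $a$ of the circle $\lambda\mapsto\lambda(z_1-z_2)$, $|\lambda|=\varrho$, which is $1$, since that circle has radius $\varrho|z_1-z_2|>|a|$ and is positively oriented. Thus $n\equiv1$.

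Finally, under the contradiction hypothesis the loop $\lambda\mapsto\gamma(\lambda)$, $|\lambda|=\rho$, takes values in the open disk $\{w:|w|<|a|\}$, which is simply connected and does not contain $a$; so that loop is null-homotopic in $\C\setminus\{a\}$ and $n(\rho)=0$, contradicting $n\equiv1$. Therefore~\eqref{TheInequality} must hold for at least one $\lambda\in\mathbb T_\rho$. The only delicate ingredient is the non-vanishing $\gamma(\lambda)\ne a$ throughout $\{|\lambda|\ge\sigma\}$, i.e. the injectivity of $g^\lambda$ on $\tfrac13\mathbf D$ from the preceding Corollary (ultimately from Proposition~\ref{distpro}, Corollary~\ref{ogu} and the Rouch\'e-type Lemma~\ref{Rouche's Lemma}); given that, together with the $\lambda$-uniform bounds~\eqref{starr}--\eqref{circc}, the winding-number bookkeeping is routine. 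I expect the payoff to be immediate: combined with~\eqref{starr}, the inequality~\eqref{TheInequality} yields $|h(z_1)-h(z_2)|\le(\rho+\lambda_\circ)|z_1-z_2|$, hence local Lipschitz continuity with the quantitative bound of Theorem~\ref{LipHopf3} upon choosing $\rho=\sigma$ as in~\eqref{sigma}.
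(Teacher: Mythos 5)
Your proof is correct and follows essentially the same route as the paper: a winding-number argument in the parameter $\lambda$ for the loop $F(\lambda)=\lambda(z_1-z_2)+f^\lambda(z_1)-f^\lambda(z_2)$, which omits the value $a=h(z_1)-h(z_2)$ on $\{|\lambda|\ge\sigma\}$ by the injectivity corollary~\eqref{inequalities}, with the index about $a$ identified as $1$ by comparison with $\lambda\mapsto\lambda(z_1-z_2)$ for large $|\lambda|$ using the uniform bound~\eqref{starr} (the paper normalizes to circle maps $\Phi^a_\rho\colon\mathbb T\to\mathbb T$ and lets $\rho\to\infty$, which is the same computation). The only divergence is the closing step: the paper runs a second homotopy, pushing $a$ to $\infty$ along the ray $\{ta\colon t\ge1\}$ to force $ta\in F(\mathbb T_\rho)$ for some $t\ge1$, whereas you argue by contradiction that if $|F(\lambda)|<|a|$ on all of $\mathbb T_\rho$ the loop would lie in the convex disk $\{w\colon |w|<|a|\}\subset\C\setminus\{a\}$ and hence have winding number $0$ about $a$ --- a slightly more elementary finish giving the same conclusion (you tacitly assume $a\ne0$, but that case is trivial, as the paper also notes).
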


\begin{proof} This inequality certainly holds for large values of $\,\rho\,$. To simplify writing we denote $\,a =  h(z_1) - h(z_2)\,$ and assume, as we may, that $\,a \neq 0\,$.
We shall consider  a family of mappings $ \,\Phi_\rho^a \,: \mathbb T \rightarrow \mathbb T\,$, with parameter
$\,\rho \geqslant \sigma\,$, given by
\begin{equation}\nonumber\label{winding}
\Phi_\rho^a(e^{i \theta}) = \frac{ F(\rho \,e^{i \theta}) - a}{ |F(\rho\, e^{i \theta}) - a |} \,,\;\;\text{where} \;\;
F(\lambda) = \lambda \cdot (z_1 - z_2)  + f^{\lambda}(z_1) - f ^ {\lambda}(z_2)
\end{equation}
By virtue of the inequalities~\eqref{inequalities}, each such mapping has well defined degree, denoted by $\,\textnormal{deg}\,\Phi_\rho^a\,$, also known as winding number.  Letting the parameter $\,\rho\,$ vary  we obtain an integer-valued continuous function in $\,\rho\,$, thus constant. We identify this constant by letting $\,\rho\rightarrow \infty\,$. The mappings converge uniformly  to $ \,\Phi_\infty ^a \colon \mathbb T \rightarrow \mathbb T\,$, where $ \,\Phi_\infty ^a (e^{i \theta}) \deff \frac{z_1 - z_2}{| z_1 - z_2 |}\cdot e^{i \theta}\,$ . The degree of this limit map is equal to 1. Hence  we conclude that
\[
 \textnormal{deg}\,\Phi_\rho^a \; = 1 \,,\;\;\;\text{for all parameters} \;\;
 \,\rho \geqslant \sigma
\]
We now fix $\,\rho \geqslant \sigma$ and move the point $\,a \neq 0\,$  to $\,\infty\,$ along the straight half-line  $\,\{ t a \colon  t \geqslant 1\,\}\,$, to observe  that for some $\,t\geqslant 1\,$ the point $\,t a\,$ lies in $\,F(\mathbb T_\rho )\,$. For if not,
 we would have well defined degree of the mappings $\,\Phi_\rho^{ta} :  \mathbb T \rightarrow \mathbb T\,$, given by
\begin{equation}\nonumber
\Phi_\rho^{ta}(e^{i \theta}) = \frac{ F(\rho e^{i \theta}) - t a}{ |F(\rho e^{i \theta}) - t a |}
\end{equation}
By virtue of continuity with respect to the parameter $\,t\,$ we would have
\[
\textnormal{deg}\,\Phi_\rho^{ta} \; = \;  \textnormal{deg} \,\Phi_\rho^a \; = 1\,, \;\;\text{for all}\; t \geqslant 1
\]

On the other hand letting $\,t \rightarrow \infty\,$ the mappings $\,\Phi_\rho^{ta} \,: \mathbb T \rightarrow \mathbb T\,$ converge uniformly to a constant map $\,\Phi_\rho^{\infty} = \frac{a}{|a|}\,$, whose degree is zero, in contradiction with the case $\, t = 1\,$. Thus $ \,ta \in F(\mathbb T_\rho)\,$, for some $\,t\geqslant 1\,$, meaning that
\begin{equation}\label{somename}
t a =  \lambda \cdot (z_1 - z_2)  + f^\lambda(z_1) - f ^\lambda(z_2)\;,\;\;\;\text{for some}\;\;\,\lambda \in \mathbb T_\rho
\end{equation}
which yields the desired inequality~\eqref{TheInequality}.
\end{proof}

\section{Proof of Theorem~\ref{LipHopf3}: the final step}\label{finally}

We invoke Lemma~\ref{TheInequalitylem} with $\rho=\sigma$, where $\sigma$ is given by~\eqref{sigma}.
Thus
\[
|h(z_1) -h(z_2)| \leqslant t |a| \leqslant   \sigma\,| z_1 - z_ 2| + | f^{\,\rho \,e^{i \theta}}(z_1) - f^{\,\rho\, e^{i \theta}} (z_2) |
\leqslant (\sigma+\lambda_\circ) \abs{z_1 - z_2}
\]
where the latter inequality follows from Proposition~\ref{GoodSol}. In terms of the gradient of $\,h\,$ this reads as,
 \begin{equation}\label{gradEst}
 |\nabla h(z)| \leqslant \sigma+\lambda_\circ
 = 3\, \| h\|_{\mathscr L^\infty(\mathbf D)}  + \;4\, \|h_{\bar z}\|_{\mathscr L^\infty(\mathbf D)}  +  6\, \lambda_\circ
\end{equation}
  The final procedure, like for the  Hopf-Laplace equation, consists of rescaling the variables.

  Let $\,\mathbf B(a,r) = \{ z \colon |z-a| \leqslant r \leqslant 1\,\}\,$ be a closed disk contained in $\, \Omega\,$ and $\, h\,$ a solution to the equation $ \,h_{\bar z} = \mathcal H(z, h_z)\,$ in $\, \Omega\,$. Consider the  function $\,\hbar(z) = r^{-1} h(rz + a)\,$. It satisfies an equation  $ \,\hbar_{\bar z} = \widetilde{\mathcal H}(z,\hbar_z)\,$ in the unit disk, where $\,\widetilde{\mathcal H}(z,\xi) = \mathcal H (r z + a, \xi )\,$. We stress that $\, 0 < r \leqslant 1\,$,  so all the conditions imposed on  $\mathcal H(z,\xi)$, including~\eqref{HolCondC}, transmit to  $\, \widetilde{\mathcal H}(z,\xi)\,$,  with the same  constants; easily verified.  In other words the structural parameter $\,\lambda_\circ\,$ remains the same. Now inequality~\eqref{gradEst} applied to $\,\hbar\,$  yields
\begin{equation}
  \|\nabla h\|_{\mathscr L^\infty (\frac{1}{3}\mathbf B)}  \; \leqslant  \;  \frac{3}{r}\, \|h\|_{\mathscr L^\infty (\mathbf B)} \;+ \; 4\,\|h_{\bar z}\|_{\mathscr L^\infty (\mathbf B)} \;+\; 6\,\lambda_\circ
\end{equation}
One may subtract a suitable constant from $h$, arriving at~\eqref{mainineq}. \qed

\bibliographystyle{amsplain}

\end{document}